\newcommand{\dist}{\mathrm{dist}\,}          
\newcommand{\diver}{\mathrm{div}\,}		     
\newcommand{\sgn}{\mathrm{sgn}\,}            
\newcommand{\spt}{\mathrm{spt}\,}  			 
\newcommand{\capE}{\mathcal{E}}
\newcommand{\capF}{\mathcal{F}}
\newcommand{\capH}{\mathcal{H}}
\newcommand{\capI}{\mathcal{I}}
\newcommand{\mathN}{\mathbb{N}}
\newcommand{\mathR}{\mathbb{R}}
\newcommand{\mathS}{\mathbb{S}}
\let\OLDthebibliography\thebibliography
\renewcommand\thebibliography[1]{
	\OLDthebibliography{#1}
	\setlength{\parskip}{1.5pt}
	\setlength{\itemsep}{1pt plus 0.3ex}
}
\theoremstyle{mystyle}
\newtheorem{theorem}{Theorem}[section]
\newtheorem*{theorem*}{Theorem}
\newtheorem{lemma}[theorem]{Lemma}
\newtheorem{proposition}[theorem]{Proposition}
\newtheorem{corollary}[theorem]{Corollary}
\theoremstyle{definition}
\newtheorem{definition}[theorem]{Definition}
\theoremstyle{remark}
\begin{document}
\title{Local H\"older regularity of minimizers for nonlocal variational problems}

\author{
Matteo Novaga\footnote{Universit\`a di Pisa, Largo Bruno Pontecorvo 5, 56127 Pisa, Italy. E-mail: {\tt matteo.novaga@unipi.it}}\and
Fumihiko Onoue\footnote{Scuola Normale Superiore, Piazza dei Cavalieri 7, 56126 Pisa, Italy. E-mail: {\tt fumihiko.onoue@sns.it}}
}

\date{\today}	

\maketitle

\begin{abstract}
	We study the regularity of solutions to a nonlocal variational problem, which is related to the image denoising model, and we show that, in two dimensions, minimizers have the same H\"older regularity as the original image. More precisely, if the datum is (locally) $\beta$-H\"older continuous for some $\beta\in(1-s,\,1]$, where $s\in (0,1)$ is a parameter related to the nonlocal operator, we prove that the solution is also $\beta$-H\"older continuous.
\end{abstract}

\tableofcontents

\section{Introduction}
Let $K:\mathR^n\setminus\{0\} \to \mathR$ be a given function and $f:\mathR^n \to \mathR$ be a given datum. We study the minimization problem 
\begin{equation}\label{miniProb}
	\min\left\{ \capF_{K,f}(u) \mid u \in BV_K(\mathR^n) \cap L^2(\mathR^n)  \right\}
\end{equation}
where the functional $\capF_{K,f}$ is defined as
\begin{equation}\label{funcDenoisingProb}
	\capF_{K,f}(u) \coloneqq \frac{1}{2}\int_{\mathR^n}\int_{\mathR^n} K(x-y)\,|u(x) - u(y)|\,dx\,dy + \frac{1}{2}\int_{\mathR^n}(u(x)-f(x))^2\,dx
\end{equation} 
for any measurable function $u:\mathR^n \to \mathR$, and the space $BV_K(\mathR^n)$ is the set of all functions such that the first term of \eqref{funcDenoisingProb} is finite (see Section \ref{preliminaries} for the detail). The function $K$ is a kernel singular at the origin, and a typical example is the function $x \mapsto |x|^{-(n+s)}$ with $s \in (0,\,1)$. If $K$ is non-negative and we understand that $\capF_{K,\,f}(u) = +\infty$ when $u \in L^2(\mathR^n)\setminus BV_K(\mathR^n)$, then we observe that the functional $\capF_{K,\,f}$ is strictly convex, lower semi-continuous, and coercive in $L^2(\mathR^n)$. Hence, from the general theory of functional analysis (see, for instance, \cite{Brezis}), we obtain existence and uniqueness of solutions to \eqref{miniProb}. 

In this paper we focus on the specific kernel $K(x) = |x|^{-(n+s)}$, with $s\in (0,1)$, 
and we study the regularity of the minimizers of $\capF_{K,f}$, under some suitable conditions on the datum $f$. 
Our minimization problem is motivated by the classical variational problem
\begin{equation}\label{classicalDenosignProb}
	\min\left\{ \capF_{f}(u) \mid u \in BV(\mathR^n) \cap L^2(\mathR^n)  \right\}
\end{equation}
where $\capF_{f}(u)$ is defined as
\begin{equation}\label{classicalTotalVariFunc}
	\capF_{f}(u) \coloneqq \int_{\mathR^n}|\nabla u|\,dx + \frac{1}{2}\int_{\mathR^n} |u-f|^2\,dx.
\end{equation}
Th minimization problem \eqref{classicalDenosignProb} has been studied by many authors since the celebrated work by Rudin, Osher, and Fatemi \cite{ROF}, and plays an important role in image denoising and restoration (see for instance \cite{CCN, Brezis02}). From the perspective of image processing, the datum $f$ in the functional $\capF_{f}$ indicates an observed image and, when the given image has poor quality, then the minimizers of $\capF_{f}$ or solutions to the Euler-Lagrange equation associated with $\capF_{f}$ correspond to regularized images. It is easy to show that the minimizer of \eqref{classicalTotalVariFunc} exists and is unique, as a result of strict convexity, lower semi-continuity and coercivity of the functional. Moreover, the minimizer turns out to be the solution, in a suitable sense, of the Euler-Lagrange equation
\begin{equation}\label{classicalTotalVariEq}
	-\diver\left( \frac{\nabla u}{|\nabla u|}\right) + u - f = 0 \quad \text{ in } \mathR^n.
\end{equation}
The regularity of minimizers of $\capF_{f}$ have been studied by several authors. In particular, the global and local regularity was investigated in
a series of papers by Caselles, Chambolle and Novaga (see \cite{CCN01, CCN02, CCN}), who proved that the solution of \eqref{classicalTotalVariEq} inherits the local H\"older or Lipschitz regularity of the datum $f$, when the space-dimension $n$ is less than or equal to 7. In addition, if $f$ is globally H\"older or Lipschitz in a convex domain $\Omega \subset \mathR^n$, the global regularity also holds for the solution of \eqref{classicalTotalVariEq} with homogeneous Neumann boundary condition. 
In the recent papers \cite{Mercier, Porretta}, some of these results were extended to general dimensions. In \cite{Mercier}, Mercier has proved that the continuity of $f$ implies the continuity of a solution $u$ and, in the case of convex domains, the modulus of continuity is also inherited globally by the solution. 
Eventually, in \cite{Porretta}, Porretta was able to remove the restriction on the space-dimension. 

For the variational problems associated with the nonlocal total variation, Aubert and Kornprobst in \cite{AuKo} and Gilboa and Osher in \cite{GiOs, GiOs02} have proposed the methods for approximating the solutions to \eqref{classicalDenosignProb} with a sequence of nonlocal total variations associated with non-singular smooth kernels. However, as far as we know, there are no results on the regularity of minimizers of the functional $\capF_{K,f}$. Thus, in this paper, we consider the local H\"older regularity of the minimizers of \eqref{funcDenoisingProb} in two dimension as an analogy of the regularity results shown in \cite{CCN01, CCN02}. Precisely, we prove the following result:

\begin{theorem}\label{mainTheorem}
	Let $n=2$. Assume that $K(x) = |x|^{-(2+s)}$ with $s \in (0,\,1)$ and $f \in L^2(\mathR^2) \cap L^{\infty}(\mathR^2)$. If $f$ is locally $\beta$-H\"older continuous 
	with $\beta \in (1-s,\,1]$, then the minimizer $u$ of the functional $\capF_{K,f}$ is also locally $\beta$-H\"older continuous in $\mathR^2$.
\end{theorem}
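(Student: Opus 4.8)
The plan is to adapt the strategy of Caselles–Chambolle–Novaga to the nonlocal setting, exploiting that the Euler–Lagrange equation for $\capF_{K,f}$ is the nonlocal equation
\[
\mathrm{P.V.}\int_{\mathR^2}\frac{\sgn(u(x)-u(y))}{|x-y|^{2+s}}\,dy + u(x) - f(x) = 0,
\]
or rather its $1$-Laplacian relaxation involving a vector field $z(x,y)$ with $\|z\|_\infty\le 1$ and $z(x,y)=\sgn(u(x)-u(y))$ on the set where $u(x)\neq u(y)$. The heart of the argument is a comparison/sliding scheme: for a fixed ball $B_R(x_0)$ on which $f$ is $\beta$-H\"older with constant $[f]$, one wants to compare $u$ with vertical translates of itself and with the competitor obtained by a suitable truncation, in order to show that the oscillation of $u$ at scale $r$ is controlled by $r^\beta$. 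Because the problem is convex and the datum enters through the $L^2$ fidelity term, the natural device is the following: if $v$ is the minimizer with datum $f$ and $\tilde v$ the minimizer with datum $\tilde f$, then $\|v-\tilde v\|_{L^\infty}\le \|f-\tilde f\|_{L^\infty}$ (an $L^\infty$-contraction, proved by testing the two Euler–Lagrange inequalities against $(v-\tilde v-c)^\pm$ and using monotonicity of the nonlocal operator). Applying this with $\tilde f(x)=f(x+h)$ would immediately give $\|u(\cdot+h)-u\|_\infty\le [f]\,|h|^\beta$ — but only if $f(\cdot+h)$ were an admissible datum globally, which it is after a harmless cutoff since we only need local information; the translation invariance of $K$ is what makes $u(\cdot+h)$ the minimizer for $f(\cdot+h)$.

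**The key steps, in order, are:** (i) write down rigorously the Euler–Lagrange formulation of \eqref{miniProb} for $K(x)=|x|^{-(2+s)}$, including the existence of the bounded antisymmetric vector field $z$; (ii) prove the $L^\infty$-contraction estimate $\|u_1-u_2\|_{L^\infty(\mathR^2)}\le \|f_1-f_2\|_{L^\infty(\mathR^2)}$ for minimizers with data $f_1,f_2\in L^2\cap L^\infty$, by a Kato-type inequality / testing argument exploiting the monotonicity of $t\mapsto\sgn(t)$ and of $t\mapsto t$; (iii) localize: since $f$ is only locally H\"older, fix $x_0$ and a radius, modify $f$ outside a large ball to a globally bounded, globally $\beta$-H\"older datum $\bar f$ agreeing with $f$ near $x_0$, and control $\|u-\bar u\|_\infty$ where $\bar u$ is the minimizer for $\bar f$ — this step must quantify how far the perturbation at large scales propagates inward, which is where the nonlocality genuinely costs something because the kernel has infinite-range tails; (iv) for the globally H\"older datum, combine (ii) with translation to get $\|\bar u(\cdot+h)-\bar u\|_\infty\le [\,\bar f\,]_{C^{0,\beta}}|h|^\beta$ for all small $h$, hence $\bar u\in C^{0,\beta}$; (v) transfer back to conclude $u\in C^{0,\beta}_{\mathrm{loc}}(\mathR^2)$.

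**The main obstacle** is step (iii), the localization, together with the precise role of the exponent range $\beta\in(1-s,1]$. The global contraction in step (ii) is clean, but $f$ is only \emph{locally} H\"older, and naively extending it preserves neither the constant nor, a priori, global boundedness in a way compatible with the tails of $|x|^{-(2+s)}$. One expects to need a local version of the comparison principle: on a ball $B_R$, if $u(x)-u(y)=$ prescribed sign for $y$ outside $B_R$ is not available, one instead uses that the nonlocal term, evaluated at a near-maximum point of $u_1-u_2$, has a favorable sign \emph{up to} a tail error of size $C\,\|u_1-u_2\|_\infty \, R^{-s} \cdot R^{2}\cdot(\text{something})$; making this error absorbable is exactly what forces $\beta>1-s$, since the oscillation gained at scale $r$ is $r^\beta$ while the tail defect scales like $r^{1-s}$ relative gain — the inequality $\beta>1-s$ is what lets the iteration close. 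Technically this is a De Giorgi–type excess-decay iteration on dyadic balls, and dimension $n=2$ enters to keep the relevant Sobolev/embedding exponents in range (so that $BV_K\hookrightarrow L^{2}$ with the right scaling and truncated competitors remain admissible). **The remaining steps are** essentially bookkeeping once the tail estimate in step (iii) is established with the sharp power of $R$.
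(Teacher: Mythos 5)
Your proposal takes a genuinely different route from the paper's, and the two are worth contrasting. The paper follows the Caselles--Chambolle--Novaga level-set strategy adapted to the fractional setting: it shows that every super-level set $\{u>t\}$ minimizes a prescribed nonlocal mean curvature functional $\capE_{K,f,t}$, deduces interior $C^{1,\alpha}$- and then (via Barrios--Figalli--Valdinoci bootstrap, which is precisely where $\beta>1-s$ enters) $C^{2,s+\alpha-1}$-regularity of the level-set boundaries, and finally compares fractional mean curvatures of $\partial\{u>t_1\}$ and $\partial\{u>t_2\}$ at nearest points via the D\'avila--del~Pino--Wei first-variation formula, obtaining $t_2-t_1\lesssim\operatorname{dist}(\partial E_{t_1},\partial E_{t_2})^\beta$. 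Your scheme instead exploits the variational/contraction structure directly: translation invariance of $\capF_{K,f}$, the comparison principle, and the observation that the minimizer for $f+c$ is $u+c$ combine to give the $L^\infty$-contraction $\|u_1-u_2\|_\infty\le\|f_1-f_2\|_\infty$, and hence, for a \emph{globally} $\beta$-H\"older datum, $\|u(\cdot+h)-u\|_\infty\le[f]_\beta|h|^\beta$. Steps (i), (ii), and (iv) of your plan are correct, and I note they would actually give a stronger conclusion than the paper for globally H\"older data, valid for every $\beta\in(0,1]$ and without any of the geometric machinery.

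However, step (iii), the localization, is a genuine gap, and you yourself flag it as the main obstacle without giving a workable argument. The difficulty is structural: the contraction estimate is \emph{global} --- it controls $\|u-\bar u\|_{L^\infty(\mathR^2)}$ by $\|f-\bar f\|_{L^\infty(\mathR^2)}$ --- whereas you need a statement of the form ``if $f=\bar f$ on $B_R(x_0)$ and both data are uniformly bounded, then $\|u-\bar u\|_{L^\infty(B_{R/2}(x_0))}$ is small,'' so that the globally H\"older modification outside a large ball does not pollute the interior estimate. For a linear or uniformly elliptic nonlocal operator, one gets such a statement from a local maximum principle with an explicit $O(R^{-s})$ tail; but $-\Delta^K_1$ is totally degenerate (its ``coefficient'' is $\sgn$), and the subdifferential formulation of Definition~\ref{defSolutionNonlocal1Lap} does not immediately give pointwise comparison up to a tail error. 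Your heuristic that ``the tail defect scales like $r^{1-s}$ relative gain'' is not substantiated, and the proposed De~Giorgi excess-decay iteration is not a natural fit for a $1$-Laplacian-type operator, which lacks the ellipticity needed to make Caccioppoli-type energy estimates bite. This is exactly the obstruction the paper circumvents by passing to level sets: there $\{u>t\}$ becomes an almost minimizer of the fractional perimeter, for which the localization is handled once and for all by the established interior regularity theory for nonlocal minimal boundaries, and the pointwise comparison is performed on the nonlocal mean curvature, which \emph{is} a nondegenerate quantity at $C^2$ boundary points. In short, your plan identifies a genuinely simpler route for the global problem, but the local-to-global reduction you sketch in (iii) would need an independent argument (a quantitative localized contraction or a barrier construction for $-\Delta^K_1$) that is nowhere in sight, and until that is supplied, the proof is incomplete.
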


We point out that we cannot show the regularity result in any dimension due to the appearance of singularities on the boundary of the levelsets of minimizers. 
However, the two-dimensional case is of particular interest for the application to image denoising.

As discussed in the case of the denoising problem in \cite{CCN01, CCN02, CCN}, our regularity result is based on the following observation: if $u$ is a minimizer of the functional $\capF_{K,f}$, then the super-level set $\{u > t\}$ for each $t \in \mathR$ is also a minimizer of the functional associated with the prescribed nonlocal mean curvature problem
\begin{equation}\label{prescribedNonlocalMCProb}
	\min\left\{ \capE_{K,f,t}(E) \mid \text{$E \subset \mathR^n$: measurable}\right\}
\end{equation}
where we define the functional $\capE_{K,f,t}$ by 
\begin{equation}\nonumber
	\capE_{K,f,t}(E) \coloneqq P_K(E) + \int_{E}(t-f(x))\,dx
\end{equation}
for any measurable set $E \subset \mathR^n$ and $t\in\mathR$. Here $P_K$ is the \textit{nonlocal perimeter} associated with the kernel $K$, which is given as
\begin{equation}\nonumber
	P_K(E) \coloneqq \int_{E}\int_{E^c}K(x-y)\,dx\,dy
\end{equation}
for any measurable set $E \subset \mathR^n$ (see Section \ref{preliminaries} for the detail). If $E_t$ is a minimizer of $\capE_{K,f,t}$ for each $t$ and $\partial E_t$ is smooth ($C^2$-regularity is sufficient), then we can obtain that the boundary $\partial E_t$ satisfies the following \textit{prescribed nonlocal mean curvature equation}
\begin{equation}\nonumber
	H^K_{E_t}(x) + t - f(x) = 0
\end{equation}
for any $x \in \partial E_t$. Here $H^K_{E_t}$ is the so-called \textit{nonlocal mean curvature} defined by
\begin{equation}\label{definitionNonlocalMC}
	H^K_{E_t}(x) \coloneqq \text{p.v.}\int_{\mathR^n}K(x-y)(\chi_{E_t}(x) - \chi_{E_t}(y)) \,dy
\end{equation}
for any $x \in \mathR^n$, where we mean by ``p.v." the Cauchy principal value. Note that, if $K(x) = |x|^{-(N+s)}$, then we denote the nonlocal mean curvature at $x \in \partial E$ associated with $K$ by $H^s_E(x)$. One may observe that, if $\partial E$ is of class $C^{1,\alpha}$ with $\alpha > s$, then $H^s_E$ is finite at each point of $\partial E$.

The idea to show the local H\"older regularity of a minimizer $u$ is based on the observation that the distance between the boundaries of the two super-level sets $\{u>t\}$ and $\{u>t'\}$ for $t,\,t' \in \mathR$ with $t \neq t'$ should not be too close. To observe this, we compare between the nonlocal mean curvatures of $\partial \{u>t\}$ and $\partial \{u>t'\}$ at the points where the smallest distance between the boundaries $\partial \{u>t\}$ and $\partial \{u>t'\}$ is attained. The comparison can be done thanks to the computations of the first variation of the nonlocal mean curvature shown in \cite{DdPW, JuLa}. Thus we are able to derive some local estimate to assert the local H\"older regularity with the assumption of the local H\"older regularity of $f$. 

The organization of this paper is as follows: In Section \ref{preliminaries}, we will introduce the notation
related to the nonlocal total variations. In Section \ref{secELeqforEnergy}, we will show the correspondence between the minimizers of $\capF_{K,f}$ in \eqref{funcDenoisingProb} and the solutions to the nonlocal 1-Laplace equation. In Section \ref{secComparisonMini}, we will give a sort of comparison principle for the minimizers. As a result of this claim, we will show that, if a datum $f$ is bounded, then the minimizer of $\capF_{K,f}$ is also bounded. In Section \ref{secCharacteriMinimizers}, we will show that each super-level set of a minimizer of $\capF_{K,f}$ is also a minimizer of $\capE_{K,f,t}$ for $t\in\mathR$. In Section \ref{secBoundednessSuperLeverlsets}, we will show the boundedness of each super-level set of the minimizer and, moreover, this set can be uniformly bounded whenever the minimizer is bounded from below. Finally, 
by using all the previous results, 
in Section \ref{secLocalHolderConti} we prove the main theorem in this paper on the H\"older regularity of minimizers in two dimensions.

\section{Notation}\label{preliminaries}
In this section, we give several definitions and properties of the space of functions with finite nonlocal total variations. First of all, we define the space $BV_K(\Omega)$ of functions with nonlocal bounded variations associated with the kernel $K$ by
\begin{equation}\label{functionBVwithK}
	BV_K(\Omega) \coloneqq \left\{u \in L^1(\Omega) \mid [u]_{K}(\Omega) <\infty \right\}
\end{equation} 
where we set, for any measurable function $u$,
\begin{equation}\label{seminormNonolocalTV}
	[u]_{K}(\Omega) \coloneqq \frac{1}{2}\int_{\Omega}\int_{\Omega} K(x-y)\,|u(x) - u(y)|\,dx\,dy.
\end{equation}
We observe that the space $BV_K(\Omega)$ coincides with the fractional Sobolev space $W^{s,\,1}(\Omega)$ when the kernel $K$ is given as $K(x)= |x|^{-(n+s)}$ with $s \in (0,\,1)$ (see, for instance, \cite{NPV}).

Secondly, if we set $\Omega = \mathR^n$ and substitute a characteristic function $\chi_E$ of a set $E\subset\mathR^n$ in \eqref{seminormNonolocalTV}, then we obtain the so-called \textit{nonlocal perimeter}. Namely, we define the nonlocal perimeter of a set $E \subset \mathR^n$ associated with the kernel $K$ by
\begin{equation}\label{definitionNonlocalPeri}
	P_K(E) \coloneqq \int_{E}\int_{E^c}K(x-y)\,dx\,dy.
\end{equation}
In the case that $K(x) = |x|^{-(n+s)}$ for $s \in (0,\,1)$, we call $P_K$ the \textit{$s$-fractional perimeter}, and we denote it by $P_s$. 
This notion was introduced by Caffarelli, Roquejoffre, and Savin in \cite{CRS}. The authors' work in \cite{CRS} is motivated by the structure of inter-phases when long-range correlations exist (see also \cite{CaSo, Imbert} for the study of interfaces with fractional mean curvatures). After their work, any problems involving not only the $s$-fractional perimeter but also the nonlocal perimeter with the kernel $K$ were studied by many authors. We leave here a short list of papers, which are related to our problems, for those who are interested in the variational problems involving the nonlocal perimeter \cite{AuKo, AGP, Brezis02, CeNo, DRM, MRT01, MRT02} and the references are therein. 

Next we can consider a localized version of the nonlocal perimeter $P_K$ as follows: let $\Omega \subset \mathR^n$ be any domain. Then the nonlocal perimeter in $\Omega$ associated with the kernel $K$ is given by 
\begin{align}
	P_K(E;\Omega) &\coloneqq 
	\int_{\Omega \cap E}\int_{\Omega \cap E^c} K(x-y)\,dx\,dy + \int_{\Omega \cap E}\int_{\Omega^c \cap E^c} K(x-y) \,dx\,dy \nonumber\\
	&\qquad + \int_{\Omega \cap E^c}\int_{\Omega^c \cap E} K(x-y) \,dx\,dy \nonumber
\end{align} 
for any $E\subset \mathR^n$. Secondly, we give the definition of solutions to the so-called \textit{nonlocal 1-Laplace equations} associated with the kernel $K$.
\begin{definition}\label{defSolutionNonlocal1Lap}
	Let $F : \mathR^n \times \mathR \to \mathR$ be a measurable function in $L^2(\mathR^n \times \mathR)$. We say that $u \in BV_K \cap L^2(\mathR^n)$ is a solution to the nonlocal equation
	\begin{equation}\label{nonlocalSemilinearEq}
		-\Delta^K_1 u(x) = F(x,\,u(x))  \quad \text{for a.e. $x\in \mathR^n$}
	\end{equation}
	if there exists a function $z: \mathR^n \times \mathR^n \to \mathR$ with $|z| \leq 1$ a.e. in $\mathR^n \times \mathR^n$ and $z(x,\,y) = - z(y,\,x)$ for a.e. $(x,\,y) \in \mathR^n \times \mathR^n$ such that
	\begin{equation}\label{ELeqforF}
		\frac{1}{2}\int_{\mathR^n}\int_{\mathR^n}K(x-y)\,z(x,\,y)(v(x) - v(y))\,dx\,dy = \int_{\mathR^n}F(x,\,u(x))\,v(x)\,dx
	\end{equation} 
	for every $v \in C^{\infty}_c(\mathR^n)$ with $[v]_K(\mathR^n) < \infty$ and
	\begin{equation}\nonumber
		z(x,\,y) \in \sgn(u(y) - u(x)) \quad \text{for a.e. $(x,\,y) \in \mathR^n \times \mathR^n$}
	\end{equation}
	where $\sgn(x)$ is a generalized sign function satisfying that 
	\begin{equation}\nonumber
		\sgn(x) \in [-1,\,1], \quad \sgn(x)\,x = |x| \quad \text{for any $x \in \mathR$}.
	\end{equation}
\end{definition}
We mention that the authors in \cite{MRT01, MRT02} give a similar definition of the nonlocal 1-Laplacian associated with an integrable kernel. 

In the present paper, we only consider the case that $F(x,\,u(x)) = u(x) - f(x)$ for a given datum $f$. The concept of the definition is motivated by the Euler-Lagrange equation of the functional
\begin{equation}\nonumber
	\capI_K(u) \coloneqq \frac{1}{2}\int_{\mathR^n} \int_{\mathR^n} K(x-y)|u(x)-u(y)|\,dx\,dy.
\end{equation}
Indeed, when we assume that $u$ is a minimizer of $\capI_K$ and consider the first variation of the functional $\capI_K$, namely, the quantity $\frac{d}{d\varepsilon}\lfloor_{\varepsilon=0}\capI_K(u + \varepsilon\phi)$ for any suitable test function $\phi$, we can formally obtain 
\begin{equation}\nonumber
	\frac{1}{2}\int_{\mathR^n}\int_{\mathR^n}K(x-y)\frac{u(x) - u(y)}{|u(x) - u(y)|}\,(\phi(x) - \phi(y))\,dx\,dy = 0.
\end{equation}
However, it is quite problematic for us to give a rigorous meaning to the ratio $\frac{u(x) - u(y)}{|u(x) - u(y)|}$. To overcome this difficulty, we may apply Definition \ref{defSolutionNonlocal1Lap} and this can be regarded as one of the proper treatments for this issue. Indeed, in Definition \ref{defSolutionNonlocal1Lap}, we may consider the condition that $z(x,\,y)(u(y) - u(x)) = |u(y) - u(x)|$ for a.e. $(x,\,y) \in \mathR^n \times \mathR^n$ with $u(x) \neq u(y)$ as a natural requirement. Note that the framework of solutions in the sense of Definition \ref{defSolutionNonlocal1Lap} has been originally developed by, for instance, Maz\'on, Rossi, and Toledo in \cite{MRT} and can be seen as a nonlocal counterpart of the framework given in \cite{ABCM}, \cite{ACM}, and \cite{MRD}. 

\section{Preliminary results}

\subsection{Euler-Lagrange equation for $\capF_{K,f}$}\label{secELeqforEnergy}
In this section, we show the necessary and sufficient condition for the minimizers of the functional $\capF_{K,f}$ in $\mathR^n$. Before stating the claim, 
we give some conditions on the kernel $K$ which we will assume in the sequel.
\begin{itemize}
	\item[(K1)] $K:\mathR^n \setminus \{0\} \to \mathR$ is a non-negative measurable function.
	\item[(K2)] $K$ is symmetric with respect to the origin, namely $K(-x) = K(x)$ for any $x \in \mathR^n\setminus \{0\}$.
\end{itemize}
We observe that a typical example of the kernel $K$ is given as $K(x)=|x|^{-(n+s)}$ with $s\in(0,\,1)$ and this function satisfies all the above assumptions. 

In the following lemma, we show that the minimizer of $\capF_{K,f}$ satisfies a prescribed nonlocal mean curvature equation. This equation can be regarded as the Euler-Lagrange equation. Moreover, we show that the converse statement is also valid.
\begin{lemma}\label{equivMinimizerAndSolution}
	Assume that the kernel $K$ satisfies (K1) and (K2) and a given datum $f$ is $L^2(\mathR^2)$. If $u \in BV_K \cap L^2(\mathR^n)$ is a minimizer of the functional $\capF_{K,f}$, then $u$ satisfies the equation
	\begin{equation}\label{nonlocalEquation00}
		-\Delta^K_1 u = u -f  \quad \text{in $\mathR^n$}
	\end{equation}
	in the sense of Definition \ref{defSolutionNonlocal1Lap}. Conversely, if $u \in BV_K \cap L^2(\mathR^n)$ is a solution of the equation \eqref{nonlocalEquation00} in the sense of Definition \ref{defSolutionNonlocal1Lap}, then $u$ is a minimizer of $\capF_{K,f}$.
\end{lemma}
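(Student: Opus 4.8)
The plan is to prove the two implications separately, exploiting the strict convexity of $\capF_{K,f}$ in $L^2$ and the fact that the ``difficult'' term $[u]_K$ is convex and positively $1$-homogeneous in its argument. For the direct implication, suppose $u$ minimizes $\capF_{K,f}$. For any $v\in C^\infty_c(\mathR^n)$ with $[v]_K(\mathR^n)<\infty$ and any $t\in\mathR$, the function $t\mapsto \capF_{K,f}(u+tv)$ is convex and attains its minimum at $t=0$, so its one-sided derivatives at $0$ have the usual sign. Computing, the $L^2$ part is differentiable with derivative $\int_{\mathR^n}(u-f)v\,dx$, while for the nonlocal part one uses that, pointwise in $(x,y)$,
\begin{equation}\nonumber
	\left.\frac{d}{dt}\right|_{t=0^\pm}\bigl|(u(x)-u(y)) + t(v(x)-v(y))\bigr|
\end{equation}
equals $\sgn(u(x)-u(y))(v(x)-v(y))$ when $u(x)\neq u(y)$ and $\pm|v(x)-v(y)|$ otherwise; dominated convergence (justified because $|\,|a+tb|-|a|\,|\le |t|\,|b|$ and $K(x-y)|v(x)-v(y)|\in L^1$ by $[v]_K<\infty$ together with $u\in L^1_{loc}$, or rather by splitting near and far from the diagonal) then yields that the one-sided derivatives of the nonlocal term exist. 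Combining the sign conditions from the left and right derivatives gives, for every such $v$, the inequality
\begin{equation}\nonumber
	\left|\int_{\mathR^n}(u-f)v\,dx\right| \le \frac12\int_{\mathR^n}\int_{\mathR^n}K(x-y)\,|v(x)-v(y)|\,dx\,dy.
\end{equation}
This is precisely the condition that the linear functional $v\mapsto \int (f-u)v$ is bounded by the seminorm $[\cdot]_K$, and a Hahn--Banach / duality argument (the standard identification of the subdifferential of a $1$-homogeneous convex functional) produces a measurable $z:\mathR^n\times\mathR^n\to\mathR$ with $\|z\|_\infty\le 1$, $z(x,y)=-z(y,x)$, satisfying \eqref{ELeqforF} with $F=u-f$. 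The remaining point is to upgrade this to $z(x,y)\in\sgn(u(y)-u(x))$ a.e.: one shows that equality must hold in the bound $z(x,y)(v(x)-v(y))\le |v(x)-v(y)|$ when integrated against $u$ itself, i.e. testing (formally) with $v=u$ and using density to make this rigorous forces $\frac12\iint K z\,(u(x)-u(y))=\int(u-f)u$ on one hand and $=[u]_K$ on the other via the minimality, hence $z(x,y)(u(y)-u(x))=|u(y)-u(x)|$ a.e.

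For the converse, suppose $u\in BV_K\cap L^2$ solves \eqref{nonlocalEquation00} in the sense of Definition \ref{defSolutionNonlocal1Lap}, with associated vector field $z$. Let $w\in BV_K\cap L^2(\mathR^n)$ be arbitrary. Using convexity of the absolute value and the pointwise bound $z(x,y)(a-b)\le|a-b|$ one estimates, for $v=w-u$,
\begin{equation}\nonumber
	[w]_K(\mathR^n) \ge [u]_K(\mathR^n) + \frac12\int_{\mathR^n}\int_{\mathR^n}K(x-y)\,z(x,y)\bigl((w(x)-w(y))-(u(x)-u(y))\bigr)\,dx\,dy,
\end{equation}
because $z(x,y)(u(x)-u(y))=|u(x)-u(y)|$ a.e. gives the first two terms and $z(x,y)(w(x)-w(y))\le |w(x)-w(y)|$ gives the rest; this is just the subgradient inequality for the convex functional $[\cdot]_K$ at $u$ with subgradient represented by $z$. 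The right-hand integral, by \eqref{ELeqforF} applied to $v=w-u$ (extended from $C^\infty_c$ to $BV_K\cap L^2$ by a density/approximation argument, which is the one technical point requiring care here), equals $\int_{\mathR^n}(u-f)(w-u)\,dx$. Adding the elementary convexity inequality for the quadratic term,
\begin{equation}\nonumber
	\frac12\int_{\mathR^n}(w-f)^2\,dx \ge \frac12\int_{\mathR^n}(u-f)^2\,dx + \int_{\mathR^n}(u-f)(w-u)\,dx,
\end{equation}
and summing gives $\capF_{K,f}(w)\ge\capF_{K,f}(u)$, so $u$ is the minimizer.

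The main obstacle, in both directions, is the justification of testing \eqref{ELeqforF} — which by hypothesis holds only for $v\in C^\infty_c(\mathR^n)$ with $[v]_K<\infty$ — against competitors that are merely in $BV_K\cap L^2$ (in particular against $v=w-u$ and against $v=u$ itself). This requires an approximation lemma: any $v\in BV_K\cap L^2(\mathR^n)$ can be approximated by $v_k\in C^\infty_c$ with $v_k\to v$ in $L^2$ and $[v_k]_K(\mathR^n)\to[v]_K(\mathR^n)$ (equivalently, with $\iint K(x-y)|(v_k(x)-v_k(y))-(v(x)-v(y))|\to 0$), using mollification together with truncation/cut-off and the fact that for $K(x)=|x|^{-(n+s)}$ the space $BV_K=W^{s,1}$ has smooth functions dense in this sense. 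One then passes to the limit in \eqref{ELeqforF}, controlling the left side by $\|z\|_\infty\le1$ and the seminorm convergence, and the right side by $u-f\in L^2$ and $v_k\to v$ in $L^2$. A secondary, more routine, difficulty is the dominated-convergence argument for differentiating $[u+tv]_K$ under the integral sign near $t=0$; this is handled by the Lipschitz bound on $|\cdot|$ and integrability of $K(x-y)|v(x)-v(y)|$ near and away from the diagonal.
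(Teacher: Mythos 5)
Your proof is correct in outline and reaches the same conclusion, but it follows a genuinely different route from the paper's. The paper organizes the argument around the subdifferential calculus in $L^2$: it quotes the characterization
\begin{equation*}
\partial\capI_K(u)=\bigl\{v\in L^2(\mathR^n)\ :\ -\Delta^K_1 u = v \text{ in the sense of Definition \ref{defSolutionNonlocal1Lap}}\bigr\}
\end{equation*}
as an established fact from \cite{MRT01,MRT02}, and then devotes the whole proof to the elementary algebraic identity $\partial\capF_{K,f}(u)=\partial\capI_K(u)+u-f$, which is obtained by the two chains of inequalities \eqref{characteriSubdiffe01} and \eqref{characteriSubdiffe02}; the equivalence with minimality then drops out of $0\in\partial\capF_{K,f}(u)$. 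What you do instead is effectively re-derive the cited black box. For the forward direction you compute the one-sided derivatives of $t\mapsto\capF_{K,f}(u+tv)$, obtain $|\int(u-f)v|\le[v]_K$, and then invoke Hahn--Banach together with $L^1$--$L^\infty$ duality for the measure $K(x-y)\,dx\,dy$ to manufacture an antisymmetric $z$ with $\|z\|_\infty\le1$ satisfying \eqref{ELeqforF}; the pairing $z(x,y)\in\sgn(u(y)-u(x))$ is then forced by testing with $v=u$ and comparing with the scaling identity $[u]_K=-\int(u-f)u$ that comes from minimality along $t\mapsto(1+t)u$. For the converse you write out the subgradient inequality for $[\cdot]_K$ at $u$ represented by $z$ and add the quadratic convexity inequality. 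Your route is more self-contained (it does not lean on \cite{MRT01,MRT02}), at the cost of needing the Hahn--Banach representation explicitly and a density/approximation step to test \eqref{ELeqforF} against $v=w-u$ and $v=u$ in $BV_K\cap L^2$ rather than $C^\infty_c$; the paper's route is shorter but delegates precisely this representation theorem to the references. Two small points to tidy up: the identity you need is $\frac12\iint Kz(u(x)-u(y))=\int(u-f)u=-[u]_K$, not $+[u]_K$ (the sign is fixed by the convention $z(x,y)\in\sgn(u(y)-u(x))$; this does not affect the conclusion), and you should make explicit that the second equality comes from the first variation along dilations $u\mapsto(1+t)u$ and the $1$-homogeneity of $[\cdot]_K$, since ``via the minimality'' leaves that step implicit.
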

\begin{proof}
	First, we recall the definition of the functional $\capI_{K}$ and the non-negativity of $K$ and thus, find that $\capI_K$ is convex, lower semi-continuous, and positive homogeneous of degree one. Then, by using the same argument as in \cite{MRT01, MRT02}, we can show the characterization of the sub-differential of $\capI_{K}(u)$ as follows:
	\begin{align}\label{characteriSubdiffNonlocalfunc}
		&\partial \capI_{K}(u) \nonumber\\
		& \quad = \left\{ v \in L^2(\mathR^n) \mid \text{$-\Delta^K_1 u = v$ in the sense of Definition \ref{defSolutionNonlocal1Lap}} \right\}.
	\end{align}
	Here we recall the definition of the sub-differential $\partial \capE(u)$ for $u \in X$ of the functional $\capE: X \to \mathR\cup\{+\infty\}$ where $X$ is the Hilbert space with the inner product $(\cdot,\cdot)_X$. We say that $v \in X$ belongs to $\partial \capE(u)$ for each $u \in X$ if it holds that, for any $w \in X$,
	\begin{equation}\nonumber
		\capE(w) - \capE(u) \geq (w,\,v)_X.
	\end{equation}
	Note that $u \in X$ is a minimizer of $\capE$ if and only if $0 \in \partial \capE(u)$. Then, from the general theory on the sub-differential, we can also show the identity 
	\begin{equation}\label{characteriSubdifferentialEnergy}
		\partial \capF_{K,f}(u) = \partial \capI_{K}(u) + u-f.
	\end{equation}
	for any $u \in L^2$. Indeed, if $v \in \partial \capF_{K,f}(u)$, then we can compute the functional of $u$ as follows; for any $w \in L^2(\mathR^n)$, 
	\begin{align}\label{characteriSubdiffe01}
		\capI_{K}(w) - \capI_{K}(u) &=  \capF_{K,f}(w) - \capF_{K,f}(u) + \frac{1}{2}\int_{\mathR^n}(u-f)^2\,dx - \frac{1}{2}\int_{\mathR^n}(w-f)^2 \nonumber\\
		&\geq \int_{\mathR^n}v(w-u)\,dx - \frac{1}{2}\int_{\mathR^n}(w-u)(w+u-2f)\,dx \nonumber\\
		&= \int_{\mathR^n}(v-u+f)(w-u)\,dx + \int_{\mathR^n}(u-f)(w-u)\,dx \nonumber\\
		&\qquad - \frac{1}{2}\int_{\mathR^n}(w-u)(w+u-2f)\,dx \nonumber\\
		&= \int_{\mathR^n}(v-u+f)(w-u)\,dx +\frac{1}{2}\int_{\mathR^n}(w-u)^2\,dx \nonumber\\
		&\geq \int_{\mathR^n}(v-u+f)(w-u)\,dx. 
	\end{align}
	Therefore we obtain $v-u+f \in \partial \capI_{K}(u)$. On the other hand, if $v \in \partial \capI_{K}(u)+u-f$, then we can compute in the following manner; for any $w \in L^2(\mathR^n)$, we have
	\begin{align}\label{characteriSubdiffe02}
		\capF_{K,f}(w) - \capF_{K,f}(u) &= \capI_{K}(w) - \capI_{K}(u) + \frac{1}{2}\int_{\mathR^n}(w-f)^2\,dx - \frac{1}{2}\int_{\mathR^n}(u-f)^2   \\
		&\geq \int_{\mathR^n} (v-u+f)(w-u)\,dx + \frac{1}{2}\int_{\mathR^n}(w-u)(w+u - 2f)\,dx \nonumber\\
		&= \int_{\mathR^n}v(w-u)\,dx + \frac{1}{2}\int_{\mathR^n}(w-u)^2\,dx  \nonumber\\
		&\geq \int_{\mathR^n}v(w-u)\,dx, 
	\end{align}
	and thus we have that $v \in \partial \capF_{K,f}(u)$. Therefore, from the computations \eqref{characteriSubdiffe01} and \eqref{characteriSubdiffe02}, we conclude that the first part of the claim is valid. Then, from \eqref{characteriSubdifferentialEnergy}, we can easily obtain the equity
	\begin{align}\label{identityOverall}
		&\partial \capF_{K,f}(u) \nonumber\\
		&\quad = \left\{ v+u-f \in L^2(\mathR^n) \mid \text{$-\Delta^K_1 u = v$ in the sense of Definition \ref{defSolutionNonlocal1Lap}} \right\}.
	\end{align}
	We can readily see that $0 \in \partial \capF_{K,f}(u)$ whenever $u$ is a minimizer of $\capF_{K,f}$ Therefore, we conclude that, if $u$ is a minimizer of $\capF_{K,f}$, then $u$ is a solution of the equation \eqref{nonlocalEquation00}.
	
	Conversely, if $u$ is a solution of the equation \eqref{nonlocalEquation00}, then from \eqref{identityOverall} we have that $0$ belongs to the set in the right-hand side of \eqref{identityOverall}, and thus we obtain $0 \in \partial \capF_{K,f}(u)$.
\end{proof}

\subsection{Comparison between minimizers}\label{secComparisonMini}
In this section, we prove a comparison principle for the minimizers of $\capF_{K,f}$. We assume that $K$ satisfies the assumptions (K1) and (K2) shown in Section \ref{secELeqforEnergy} and the data $f_1$ and $f_2$ satisfy that $f_1 \leq f_2$. Then we show that the minimizers $u_1$ and $u_2$ associated with $f_1$ and $f_2$, respectively, preserves the inequality. Precisely, we prove the following result:
\begin{lemma}\label{comparisonMini}
	Let $f_i$ be in $L^2(\mathR^n)$ for each $i \in \{1,\,2\}$ and $u_i \in BV_K \cap L^2(\mathR^n)$ be a minimizer of $\capF_{K,f_i}$ for each $i\in\{1,\,2\}$. Assume that the kernel $K : \mathR^n \setminus \{0\} \to \mathR$ satisfies (K1) and (K2). If $f_1 \leq f_2$ $\mathcal{L}^n$-a.e. in $\mathR^n$, then $u_1 \leq u_2$ $\mathcal{L}^n$-a.e. in $\mathR^n$.
\end{lemma}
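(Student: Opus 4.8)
The plan is to use the variational characterization of minimizers together with a standard truncation argument. Write $w = (u_1 - u_2)^+ = \max\{u_1 - u_2,\,0\}$ and suppose, for contradiction, that $w \not\equiv 0$. I would test the minimality of $u_1$ against the competitor $u_1 - w = \min\{u_1,\,u_2\}$ and the minimality of $u_2$ against the competitor $u_2 + w = \max\{u_1,\,u_2\}$, then add the two resulting inequalities $\capF_{K,f_1}(u_1) \le \capF_{K,f_1}(u_1 - w)$ and $\capF_{K,f_2}(u_2) \le \capF_{K,f_2}(u_2 + w)$. Both competitors lie in $BV_K \cap L^2(\mathR^n)$ because the lattice operations do not increase the nonlocal seminorm $[\cdot]_K$ and are bounded in $L^2$ by $|u_1| + |u_2|$; so the test is legitimate.

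The second step is to expand the four functionals and collect terms. For the fidelity part, a direct computation gives
\begin{equation}\nonumber
\tfrac12\!\int (u_1 - f_1)^2 + \tfrac12\!\int (u_2 - f_2)^2 - \tfrac12\!\int (u_1 - w - f_1)^2 - \tfrac12\!\int (u_2 + w - f_2)^2 = \int w\,(f_1 - f_2 - w)\,dx \le -\!\int w^2\,dx,
\end{equation}
using $f_1 \le f_2$ a.e. For the nonlocal part, the key pointwise inequality is the submodularity-type estimate
\begin{equation}\nonumber
|a_1 - a_1'| + |a_2 - a_2'| \ge |(a_1 - w_1) - (a_1' - w_1')| + |(a_2 + w_1) - (a_2' + w_1')|,
\end{equation}
where $w_i = (a_i - b_i)^+$ in the notation $a_i = u_i(x)$, $b_i = u_i(y)$, etc. Integrating this against $K(x-y)\ge 0$ shows that
\begin{equation}\nonumber
[u_1]_K(\mathR^n) + [u_2]_K(\mathR^n) - [u_1 - w]_K(\mathR^n) - [u_2 + w]_K(\mathR^n) \ge 0.
\end{equation}
Adding all the contributions, the sum of the two minimality inequalities forces $0 \le -\int w^2\,dx \le 0$, hence $w = 0$ a.e., i.e. $u_1 \le u_2$ a.e.

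The main obstacle is the verification of the pointwise lattice inequality for the nonlocal seminorm, i.e. that passing from $(u_1, u_2)$ to $(\min\{u_1,u_2\}, \max\{u_1,u_2\})$ does not increase $[u_1]_K + [u_2]_K$; this is a short but slightly fiddly case analysis on the signs of $u_1(x)-u_2(x)$ and $u_1(y)-u_2(y)$, and one must be careful that the cross terms from the localization (products over $E$ and $E^c$) rearrange correctly — here it is cleanest to work with the global seminorm on $\mathR^n$ and with the difference $u_1 - u_2$ at the two points $x$ and $y$ simultaneously. A secondary point is to make sure the competitors genuinely have finite energy, which follows since $[\min\{u_1,u_2\}]_K + [\max\{u_1,u_2\}]_K \le [u_1]_K + [u_2]_K < \infty$ by exactly the same inequality, and $\min,\max \in L^2$. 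One could alternatively phrase the whole argument at the level of the Euler–Lagrange equation of Lemma \ref{equivMinimizerAndSolution} by testing the difference of the two equations with $w$, but the direct comparison of energies is more transparent and avoids having to manipulate the vector fields $z_1, z_2$.
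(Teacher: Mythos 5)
Your overall strategy—test $u_1$ against $\min\{u_1,u_2\}=u_1-w$ and $u_2$ against $\max\{u_1,u_2\}=u_2+w$, add the two minimality inequalities, and split the gain into a submodularity estimate on $[\cdot]_K$ plus a pointwise estimate on the fidelity terms—is the same as the paper's (the paper works with $u_\pm=\max/\min$ and proves submodularity via the coarea formula and submodularity of $P_K$ rather than by your pointwise case analysis, but that is cosmetic). However, your fidelity computation contains a sign error. Writing $X=u_1-f_1$ and $Y=u_2-f_2$,
\begin{align*}
&\tfrac12\!\int X^2 - \tfrac12\!\int(X-w)^2 + \tfrac12\!\int Y^2 - \tfrac12\!\int(Y+w)^2 \\
&\qquad= \int Xw\,dx - \int Yw\,dx - \int w^2\,dx = \int (u_1-u_2)\,w\,dx - \int(f_1-f_2)\,w\,dx - \int w^2\,dx ,
\end{align*}
and since $(u_1-u_2)\,w = w^2$ a.e.\ (because $w=(u_1-u_2)^+$), this equals $\int w\,(f_2-f_1)\,dx$, \emph{not} $\int w\,(f_1-f_2-w)\,dx$. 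In particular the fidelity gain is $\geq 0$, not $\leq -\int w^2$; the $-\int w^2\,dx$ term you produced is an artifact of the error, and it is precisely what lets you (falsely) read off $w\equiv 0$.

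With the correct formula, the sum of the two minimality inequalities gives
\begin{equation*}
0 \;\geq\; \bigl([u_1]_K + [u_2]_K - [u_1-w]_K - [u_2+w]_K\bigr) \;+\; \int w\,(f_2-f_1)\,dx ,
\end{equation*}
so both nonnegative summands vanish. But this alone only yields $w(f_2-f_1)=0$ a.e.\ (hence $w=0$ only on $\{f_2>f_1\}$) and equality in submodularity—it does not force $w\equiv 0$. To close the argument one has to notice that the vanishing of the sum, together with each individual minimality inequality being $\leq 0$, implies $\capF_{K,f_1}(u_1)=\capF_{K,f_1}(u_1-w)$ and $\capF_{K,f_2}(u_2)=\capF_{K,f_2}(u_2+w)$, so the competitors are also minimizers, and then invoke uniqueness of the minimizer (strict convexity of $\capF_{K,f}$ via the fidelity term) to conclude $u_1-w=u_1$. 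This uniqueness step is what the paper's proof uses at the end and what your proposal omits.
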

\begin{proof}
	Let $u_1,\,u_2 \in BV_K(\mathR^n)$ be minimizers of $\capF_{K,f}$ associated with given data $f_1,\,f_2 \in L^{2}(\mathR^n)$, respectively. First of all, we prove the following inequality: 
	\begin{equation}\label{submodularNonlocalTotalVari}
		[u_{+}]_{K}(\mathR^n) + [u_{-}]_{K}(\mathR^n) \leq [u_1]_{K}(\mathR^n) + [u_2]_{K}(\mathR^n).
	\end{equation}
	Indeed, setting  
	\begin{equation}\label{maxMin}
		u_{+}(x) \coloneqq \max\{u_1(x),\,u_2(x)\}, \quad u_{-}(x) \coloneqq \min\{u_1(x),\,u_2(x)\}
	\end{equation}
	for any $x \in \mathR^n$ and by the co-area formula, we have that
	\begin{align}\label{coareaFormulaTV}
		[u_{i}]_{K}(\mathR^n) &= \int_{-\infty}^{\infty}\,\frac{1}{2}\int_{\mathR^n}\int_{\mathR^n}K(x-y)\,|\chi_{\{u_{i}>t\}}(x) - \chi_{\{u_{i}>t\}}(y)| \,dx\,dy\,dt \nonumber\\
		&= \int_{-\infty}^{\infty} P_K(\{u_{i} > t\}) \,dt 
	\end{align}
	for any $i \in \{1,\,2,\,+,\,-\}$. We recall that the nonlocal perimeter $P_K$ is sub-modular, namely, it holds that
	\begin{equation}\label{submodularNonlocalPeri}
		P_K(E \cup F) + P_K(E \cap F) \leq P_K(E) + P_K(F)
	\end{equation}
	for any $E,\,F\subset \mathR^n$. Therefore from \eqref{submodularNonlocalPeri} and the definitions of $u_{+}$ and $u_{-}$, we obtain the claim.
	
	Now from the general theory of calculus of variations, the minimizer of $\capF_{K,f}$ is unique in $L^2(\mathR^n)$ and thus, it is sufficient to prove that 
	\begin{equation}\nonumber
		\capF_{K,f_2}(u_{+}) \leq \capF_{K,f_2}(u_2)
	\end{equation} 
	where $u_{+}$ is defined in \eqref{maxMin} to obtain the lemma. From a simple computation, we can easily see that the inequality
	\begin{equation}\label{computationMinimizers}
		(u_{-}- f_1)^2 + (u_{+}- f_2)^2 \leq (u_1- f_1)^2 + (u_2- f_2)^2
	\end{equation}
	in $\mathR^n$. From the minimality of $u_i$ for $i\in\{1,\,2\}$, we have
	\begin{equation}\label{estimate01}
		\capF_{K,f_1}(u_1) + \capF_{K,f_2}(u_2) \leq \capF_{K,f_1}(u_{-}) + \capF_{K,f_2}(u_{+}). 
	\end{equation}
	On the other hand, from \eqref{submodularNonlocalTotalVari} and \eqref{computationMinimizers}, we have
	\begin{align}\label{estimate02}
		&\capF_{K,f_1}(u_{-}) + \capF_{K,f_2}(u_{+})  \\
		&\leq [u_{-}]_{K}(\mathR^n) + \frac{1}{2}\int_{\mathR^n}(u_{-}- f_1)^2\,dx + [u_{+}]_{K}(\mathR^n) + \frac{1}{2}\int_{\mathR^n}(u_{+}- f_2)^2\,dx \nonumber\\
		&= [u_1]_{K}(\mathR^n) + \frac{1}{2}\int_{\mathR^n}(u_1- f_1)^2\,dx + [u_2]_{K}(\mathR^n) + \frac{1}{2}\int_{\mathR^n}(u_2- f_2)^2\,dx \nonumber\\
		&\quad +\frac{1}{2}\int_{\mathR^n}(u_{-}- f_1)^2\,dx - \frac{1}{2}\int_{\mathR^n}(u_1- f_1)^2\,dx \nonumber\\
		&\quad \quad + \frac{1}{2}\int_{\mathR^n}(u_{+}- f_2)^2\,dx - \frac{1}{2}\int_{\mathR^n}(u_2- f_2)^2\,dx \nonumber\\
		&\leq \capF_{K,f_1}(u_1) + \capF_{K,f_2}(u_2). 
	\end{align}
	Thus from \eqref{estimate01} and \eqref{estimate02}, we obtain
	\begin{equation}\label{identityEnergies}
		\capF_{K,f_1}(u_1) + \capF_{K,f_2}(u_2) = \capF_{K,f_1}(u_{-}) + \capF_{K,f_2}(u_{+})
	\end{equation}
	Now suppose by contradiction that $\capF_{K,f_2}(u_{+}) > \capF_{K,f_2}(u_2)$. Then from \eqref{identityEnergies} we have
	\begin{equation}\nonumber
		\capF_{K,f_1}(u_1) > \capF_{K,f_1}(u_{-})
	\end{equation}
	which contradicts the minimality of $u_1$. Thus we obtain the inequality $\capF_{K,f_2}(u_{+}) \leq \capF_{K,f_2}(u_2)$. Therefore, by the uniqueness of the minimizer of $\capF_{K}$ in $L^2(\mathR^n)$, we obtain that $u_{+} = u_2$ a.e. in $\mathR^n$, which implies that $u_2 \geq u_1$ a.e. in $\mathR^n$.
\end{proof}

\begin{corollary}\label{nonnegativityMini}
	Assume that the kernel $K: \mathR^n \setminus \{0\} \to \mathR$ satisfies the assumptions (K1) and (K2) in Section \ref{secELeqforEnergy}. If a datum $f \in L^2(\mathR^n)$ is non-negative a.e. in $\mathR^n$, then the minimizer $u \in BV_K \cap L^2(\mathR^n)$ is also non-negative a.e. in $\mathR^n$. 
\end{corollary}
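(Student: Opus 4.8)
\emph{Proof proposal.} The plan is to obtain the statement as an immediate consequence of the comparison principle established in Lemma~\ref{comparisonMini}. I would apply that lemma with the choice $f_1 \equiv 0$ and $f_2 \coloneqq f$. These data are admissible: both lie in $L^2(\mathR^n)$ (the constant $0$ trivially, $f$ by hypothesis), and $f_1 \le f_2$ $\mathcal{L}^n$-a.e.\ in $\mathR^n$ precisely because $f \ge 0$ a.e.\ by assumption.

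The one point to verify is the identification of the minimizer $u_1$ associated with $f_1 \equiv 0$. Since
\begin{equation}\nonumber
	\capF_{K,0}(v) = [v]_{K}(\mathR^n) + \frac{1}{2}\int_{\mathR^n} v(x)^2\,dx
\end{equation}
is a sum of two non-negative quantities, and the constant function $v \equiv 0$ belongs to $BV_K(\mathR^n)\cap L^2(\mathR^n)$ with $[0]_K(\mathR^n)=0$ and $\int_{\mathR^n} 0^2\,dx = 0$, we get $\capF_{K,0}(0)=0\le \capF_{K,0}(v)$ for every admissible $v$. Hence $u_1 \equiv 0$ is a minimizer of $\capF_{K,0}$, and by the uniqueness of minimizers in $L^2(\mathR^n)$ recalled in the Introduction (strict convexity of the functional when $K\ge 0$), it is \emph{the} minimizer.

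Applying Lemma~\ref{comparisonMini} to the pair $(f_1,f_2)=(0,f)$ then yields $u_1 \le u_2$ $\mathcal{L}^n$-a.e.\ in $\mathR^n$, that is, $0 \le u$ a.e.\ in $\mathR^n$, where $u = u_2$ is the minimizer of $\capF_{K,f}$. This is the desired conclusion. I do not expect any genuine obstacle here: the entire content is contained in Lemma~\ref{comparisonMini}, and the only elementary check is that the zero function is admissible and makes both terms of $\capF_{K,0}$ vanish, which is immediate.
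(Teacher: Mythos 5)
Your proof is correct and follows the same route as the paper: identify $u_1 \equiv 0$ as the unique minimizer of $\capF_{K,0}$ (since both terms of the functional vanish at $0$ and are non-negative), then apply Lemma~\ref{comparisonMini} with $f_1 = 0$, $f_2 = f$. The extra detail you include about admissibility and uniqueness is sound but merely spells out what the paper leaves implicit.
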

\begin{proof}
	Since it holds that 
	\begin{equation}\nonumber
		\capF_{K,0}(0) = 0 \leq \capF_{K,0}(v)
	\end{equation}
	for every $v \in BV_K \cap L^2(\mathR^n)$, we have that the unique solution of the problem
	\begin{equation}\nonumber
		\inf\{\capF_{K,0}(v) \mid v \in BV_K \cap L^2\}
	\end{equation}
	is $v=0$. Hence, by applying Lemma \ref{comparisonMini} to the case that $f_1=0$ and $f_2=f$, we obtain that $0 \leq u$ a.e. in $\mathR^n$.
\end{proof}

Finally, we show a sort of comparison property of minimizers under the assumption that a datum $f$ is bounded in $\mathR^n$. We do not derive the following proposition directly from Lemma \ref{comparisonMini} but from a simple computation. 
\begin{proposition}\label{comparisonLinfty}
	Let $u \in BV_K \cap L^2(\mathR^n)$ be a minimizer of $\capF_{K,f}$ with a datum $f \in L^2(\mathR^n)$. Assume that the kernel $K:\mathR^n \setminus \{0\} \to \mathR$ is non-negative measurable function. If there exists a constant $C>0$ such that $|f(x)| \leq C$ for a.e. $x \in \mathR^n$, then $|u(x)| \leq C$ for a.e. $x \in \mathR^n$ with the same constant $C$.
\end{proposition}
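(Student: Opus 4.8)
The plan is to argue by a direct truncation of the competitor, as the remark preceding the statement suggests, rather than invoking Lemma~\ref{comparisonMini}. Set $T(t) \coloneqq \min\{C,\max\{-C,t\}\}$ for $t \in \mathR$ and define the truncated function $u_C \coloneqq T \circ u$, i.e. $u_C(x) = \min\{C,\max\{-C,u(x)\}\}$ for a.e. $x \in \mathR^n$. Since $|u_C| \le |u|$ pointwise, we have $u_C \in L^1(\mathR^n) \cap L^2(\mathR^n)$, and once we check $[u_C]_K(\mathR^n) \le [u]_K(\mathR^n) < \infty$ we will know that $u_C$ is an admissible competitor in the minimization problem \eqref{miniProb}.

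First I would show that truncation does not increase the nonlocal total variation. The key point is the elementary fact that $T$ is $1$-Lipschitz, so that $|T(a)-T(b)| \le |a-b|$ for all $a,b \in \mathR$. Applying this with $a = u(x)$, $b = u(y)$ gives $|u_C(x)-u_C(y)| \le |u(x)-u(y)|$ for a.e. $(x,y) \in \mathR^n\times\mathR^n$; multiplying by the non-negative kernel $K(x-y)$ and integrating yields $[u_C]_K(\mathR^n) \le [u]_K(\mathR^n)$, that is $\capI_K(u_C) \le \capI_K(u)$.

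Next I would control the fidelity term. Using that $|f(x)| \le C$ for a.e. $x$, a pointwise case distinction shows $(u_C(x)-f(x))^2 \le (u(x)-f(x))^2$ for a.e. $x \in \mathR^n$, with \emph{strict} inequality at every $x$ with $|u(x)| > C$: indeed, if $u(x) > C \ge f(x)$ then $0 \le u_C(x)-f(x) = C - f(x) < u(x)-f(x)$, and symmetrically if $u(x) < -C$. Integrating, $\int_{\mathR^n}(u_C-f)^2\,dx \le \int_{\mathR^n}(u-f)^2\,dx$, with strict inequality as soon as $\mathcal{L}^n(\{|u|>C\}) > 0$.

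Combining the two estimates gives $\capF_{K,f}(u_C) \le \capF_{K,f}(u)$; since $u$ is a minimizer this must be an equality, which forces equality in the fidelity estimate and hence $\mathcal{L}^n(\{|u|>C\}) = 0$, i.e. $|u| \le C$ a.e. in $\mathR^n$. (Alternatively, one may note that $u_C$ is then itself a minimizer and appeal to the uniqueness of the minimizer in $L^2(\mathR^n)$ to get $u_C = u$ a.e.) There is no genuine obstacle in this argument; the only points demanding a little care are the verification that the truncation map is $1$-Lipschitz so that the nonlocal seminorm does not grow, the check that $u_C$ stays in the admissible class $BV_K(\mathR^n)\cap L^2(\mathR^n)$, and bookkeeping the strictness of the fidelity inequality on $\{|u|>C\}$ (or, equivalently, invoking uniqueness).
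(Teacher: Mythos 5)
Your proof is correct and follows essentially the same strategy as the paper: truncate the minimizer, observe that truncation does not increase the nonlocal seminorm and strictly decreases the fidelity term wherever $|u|>C$, and conclude by minimality (or uniqueness). The only cosmetic differences are that you perform a single two-sided truncation instead of the paper's two one-sided ones, and you invoke the $1$-Lipschitz property of the truncation map rather than the paper's explicit case-by-case computation of $|v(x)-v(y)|^2-|u(x)-u(y)|^2$; both simplifications are sound.
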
 
\begin{proof}
	It is sufficient to show that, if $f \leq  C$ a.e. in $\mathR^n$ with some constant $C>0$, then $u \leq  C$ a.e. in $\mathR^n$ with the same constant $C$ because we only repeat the same argument as we show in this proof. We define $v(x) \coloneqq \min\{u(x),\,C\}$ for $x \in \mathR^n$. It is sufficient to show that $u = v$ for a.e. in $\mathR^n$. From the definition, we can show the claim that $|v(x) - v(y)| \leq |u(x) - u(y)|$ for $x,\,y \in \mathR^n$. Indeed, if $u(x) \leq C$ and $u(y) \leq C$ or $u(x) > C$ and $u(y) > C$, then we can readily obtain the claim. If $u(x) \leq C$ and $u(y) > C$, then we have
	\begin{align}
		|u(x) - u(y)|^2 - |v(x) - v(y)|^2 &= u^2(y) - C^2 - 2u(x)\,u(y) + 2u(x)\,C \nonumber\\
		&= (u(y)-C)(u(y) + C - 2u(x)) \geq 0. \nonumber
	\end{align}
	In the same way, we can prove the claim if $u(x) > C$ and $u(y) \leq C$. Moreover, we can show that $(v-f)^2 \leq (u-f)^2$ in $\mathR^n$.	Therefore we compute the functional associated with $v$ as follows:
	\begin{align}
		\capF_{K,f}(v) &= \frac{1}{2}\int_{\mathR^n}\int_{\mathR^n}K(x-y)\,|v(x)-v(y)|\,dx\,dy + \frac{1}{2}\int_{\mathR^n}(v-f)^2 \,dx \nonumber\\
		& \leq \frac{1}{2}\int_{\mathR^n}\int_{\mathR^n}K(x-y)\,|u(x)-u(y)|\,dx\,dy + \frac{1}{2}\int_{\mathR^n}(u-f)^2 \,dx \nonumber\\
		&= \capF_{K,f}(u). \nonumber
	\end{align}
	Thus, from the uniqueness of the minimizer of $\capF_{K,f}$ in $L^2(\mathR^n)$, we obtain $v = u$ a.e. in $\mathR^n$ and this concludes the proof.
\end{proof}

\subsection{Characterization of minimizers for $\capF_{K,f}$}\label{secCharacteriMinimizers}
In this section, we show the following claim which gives a relation between the minimizers of $\capF_{K,f}$ and $\capE_{K,f,t}$ for $t \in \mathR$. Recall that $\capE_{K,f,t}(E)$ as
\begin{equation}\label{nonlocalPeriEnergy}
	\capE_{K,f,t}(E) \coloneqq P_K(E) + \int_{E}(t-f(x))\,dx
\end{equation}
for every measurable set $E \subset \mathR^n$ where we assume that $f \in L^2(\mathR^n)$ is a given datum and $t\in\mathR$ is any number. 

\begin{lemma}\label{relationMiniTwoEnergies}
	Assume that the kernel $K(x) = |x|^{-(n+s)}$ for $x \in \mathR^n \setminus \{0\}$ with $s \in (0,\,1)$ and a datum $f \in L^2 \cap L^{\infty}(\mathR^n)$. If $u \in BV_K \cap L^2(\mathR^n)$ be a minimizer of $\capF_{K,f}$, then the set $\{x\in\mathR^n \mid u(x)>t\}$ is also a minimizer of $\capE_{K,f,t}(E)$ for every $t\in\mathR$ among measurable sets $E \subset \mathR^n$.
\end{lemma}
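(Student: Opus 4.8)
\emph{Strategy.} I would adapt to the nonlocal setting the ``layer--cake'' argument used by Chambolle and by Caselles--Chambolle--Novaga in the local case, decomposing the energy $\capF_{K,f}$ along the super-level sets of $u$. Two identities are at its core. The first is the coarea formula \eqref{coareaFormulaTV}, $[w]_{K}(\mathR^{2})=\int_{\mathR}P_{K}(\{w>t\})\,dt$. The second is the elementary Cavalieri identity $\tfrac12(b-c)^{2}-\tfrac12(a-c)^{2}=\int_{\mathR}\bigl(\chi_{\{b>t\}}-\chi_{\{a>t\}}\bigr)(t-c)\,dt$, valid for all $a,b,c\in\mathR$ since it is just $\int_{a}^{b}(t-c)\,dt$. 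Taking $a=u(x)$, $b=w(x)$, $c=f(x)$, integrating in $x$, interchanging the order of integration, and adding the coarea formula, one is led to
\begin{equation}\label{verticalDecompositionPlan}
	\capF_{K,f}(w)-\capF_{K,f}(u)=\int_{\mathR}\Bigl(\capE_{K,f,t}(\{w>t\})-\capE_{K,f,t}(\{u>t\})\Bigr)\,dt
\end{equation}
for competitors $w\in BV_{K}\cap L^{2}(\mathR^{2})$ for which the right-hand side makes sense (which, thanks to Proposition~\ref{comparisonLinfty}, will hold for all $w$ used below).

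\emph{Step 1 (geometric problems and a nested family of minimizers).} First I dispose of the trivial range: for $t\geq\|f\|_{L^{\infty}}$ one has $t-f\geq0$ a.e., so $\emptyset$ minimizes $\capE_{K,f,t}$ and, by Proposition~\ref{comparisonLinfty}, $\{u>t\}=\emptyset$; for $t<0$ one has $\mathcal{L}^{2}(\{u\leq t\})<\infty$ (as $u\in L^{2}$), so $\capE_{K,f,t}(\{u>t\})=-\infty$, which is the infimum of $\capE_{K,f,t}$. It therefore suffices to treat $t$ in a bounded interval, on which $\capE_{K,f,t}$ admits a minimizer: a minimizing sequence has equibounded $s$-perimeter and, since $f\in L^{2}$ forces $\mathcal{L}^{2}(\{f>t/2\})<\infty$ for $t>0$, equibounded Lebesgue measure, hence it is precompact in $L^{1}_{\mathrm{loc}}(\mathR^{2})$ (a concentration/tightness argument rules out mass escaping to infinity), and one concludes by lower semicontinuity of $P_{s}$. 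Next, using that $P_{K}$ is submodular \eqref{submodularNonlocalPeri} — hence so is $\capE_{K,f,t}$ — together with the mixed-level inequality $\capE_{K,f,s}(A\cup B)+\capE_{K,f,t}(A\cap B)\leq\capE_{K,f,s}(A)+\capE_{K,f,t}(B)$ for $s<t$ (immediate from submodularity at level $s$ and $\capE_{K,f,t}(\cdot)=\capE_{K,f,s}(\cdot)+(t-s)\mathcal{L}^{2}(\cdot)$), the minimal minimizer $E_{t}$ of $\capE_{K,f,t}$ — the essential intersection of all minimizers, itself a minimizer by submodularity and lower semicontinuity — satisfies $E_{s}\supseteq E_{t}$ whenever $s<t$.

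\emph{Step 2 (reconstruction and conclusion).} Since the family $\{E_{t}\}$ is nested, there is a function $v$ with $\{v>t\}=E_{t}$ for a.e. $t$ (extended by $\mathR^{2}$ for $t<0$ and by $\emptyset$ for $t\geq\|f\|_{L^{\infty}}$); the a priori bounds $P_{s}(E_{t})\leq c(t)$ and $\mathcal{L}^{2}(E_{t})\leq 2c(t)/t+\mathcal{L}^{2}(\{f>t/2\})$, with $c(t):=\int_{\{f>t\}}(f-t)$ (which satisfies $\int_{0}^{\infty}c(t)\,dt\leq\|f\|_{L^{2}}^{2}$), show that $0\leq v\leq\|f\|_{L^{\infty}}$ and $v\in BV_{K}\cap L^{2}(\mathR^{2})$, so $v$ is admissible. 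Plugging $w=v$ into \eqref{verticalDecompositionPlan} and using that each $E_{t}$ minimizes $\capE_{K,f,t}$, the right-hand side is $\leq0$; since $u$ is the \emph{unique} minimizer of $\capF_{K,f}$, the left-hand side is $\geq0$. Hence the (nonpositive) integrand vanishes for a.e. $t$, i.e. $\capE_{K,f,t}(\{u>t\})=\min\capE_{K,f,t}$ for a.e. $t\in\mathR$. To pass from almost every $t$ to every $t$, fix $t_{0}$ and choose good levels $t_{j}\downarrow t_{0}$; then $\chi_{\{u>t_{j}\}}\to\chi_{\{u>t_{0}\}}$ monotonically and in $L^{1}$ on the relevant range, so for every $E$ with $\capE_{K,f,t_{0}}(E)<\infty$,
\[
	\capE_{K,f,t_{0}}(\{u>t_{0}\})\leq\liminf_{j\to\infty}\capE_{K,f,t_{j}}(\{u>t_{j}\})\leq\liminf_{j\to\infty}\capE_{K,f,t_{j}}(E)=\capE_{K,f,t_{0}}(E),
\]
using lower semicontinuity of $P_{K}$ for the first inequality and dominated convergence for the zero-order term.

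\emph{Main obstacle.} The delicate part, where I expect most of the work to lie, is Step~1--2: the existence and, above all, the \emph{nestedness} of the minimizers $E_{t}$, the verification that the reconstructed $v$ is an admissible competitor, and the rigorous justification of the Fubini interchange (and of the cancellation of the divergent contributions for large $|t|$) behind \eqref{verticalDecompositionPlan}; by comparison, the coarea bookkeeping and the limiting argument are routine. An alternative route, bypassing the reconstruction of $v$, is to combine Lemma~\ref{equivMinimizerAndSolution} with a coarea-type decomposition of the subdifferential of the $1$-homogeneous functional $\capI_{K}$, identifying $\partial\capI_{K}(u)$ with the subdifferential of $P_{K}$ along the level sets of $u$.
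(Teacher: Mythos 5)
Your proposal is essentially correct, but it follows a genuinely different route from the paper's. You take the Chambolle--type ``layer--cake'' path: you establish the vertical decomposition
$\capF_{K,f}(w)-\capF_{K,f}(u)=\int_\mathR\bigl(\capE_{K,f,t}(\{w>t\})-\capE_{K,f,t}(\{u>t\})\bigr)\,dt$,
then \emph{independently} produce a nested family of minimizers $E_t$ of the geometric problems (via submodularity and the mixed-level inequality), reconstruct a function $v$ with $\{v>t\}=E_t$, and use uniqueness of the $\capF_{K,f}$-minimizer to force the integrand to vanish. The paper instead never needs to know that the geometric problems have a minimizer in advance: it invokes Lemma~\ref{equivMinimizerAndSolution}, obtains the ``vector field'' $z_u$ with $z_u(x,y)(u(y)-u(x))=|u(y)-u(x)|$, slices this identity by coarea to get $z_u(x,y)(\chi_{\{u>t\}}(y)-\chi_{\{u>t\}}(x))=|\chi_{\{u>t\}}(y)-\chi_{\{u>t\}}(x)|$ for a.e.\ $t$, and then tests the weak Euler--Lagrange equation against $W^{s,1}$-approximations of $\chi_{\{u>t\}}-\chi_F$; the duality $|z_u|\le1$ gives $P_K(F)$ as an upper bound, and the monotonicity trick $u-f=(u-t)+(t-f)$ replaces $u-f$ by $t-f$ on the symmetric difference. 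The two approaches are both standard in the local TV literature and lead to the same result; the paper's is more economical precisely because it does not need a separate existence theorem for the $\capE_{K,f,t}$-problems, which in your route is a nontrivial prerequisite (your Step~1 existence claim is asserted rather than carried out, though it can be made rigorous for $t>0$ using equiboundedness of perimeter and volume, $W^{s,1}\hookrightarrow L^1_{\mathrm{loc}}$ compactness, Fatou, and the observation that escaping mass can only increase the energy when $t>0$). Your proposal also identifies, in its final sentence, the paper's actual route as an alternative; in that sense you diagnosed both paths correctly. Two smaller remarks: the ``boundary cases'' in your Step~1 deserve more care, since for $t<0$ the set $\{u>t\}$ may have infinite measure so that $\capE_{K,f,t}(\{u>t\})$ is not finite, and the lemma's meaning there is really that the competitor class is tacitly restricted to sets of finite measure and finite $s$-perimeter (as the paper's proof explicitly assumes); and your a.e.-to-every-$t$ limiting argument, while correct, should say explicitly that you take $t_j\downarrow t_0$ along \emph{good} levels (measure-zero bad set), use lower semicontinuity of $P_K$ under monotone convergence $\chi_{\{u>t_j\}}\uparrow\chi_{\{u>t_0\}}$, and dominated convergence of the bulk term, which is exactly what the paper does.
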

\begin{proof}
	Let $F \subset \mathR^n$ be any measurable set. We may assume that $P_K(F) < \infty$; otherwise this set cannot minimize the functional $\capE_{K,f,t}$. Moreover, we may assume that $\|\chi_F\|_{L^1} = |F| <\infty$ because of the nonlocal isoperimeteric inequality. Then it suffices to show that the super-level set $\{u > t\}$ for each $t\in\mathR$ satisfies the inequality
	\begin{equation}\label{minimalityInequality}
		P_K(\{u > t\}) + \int_{\{u > t\}}(t-f(x))\,dx \leq P_K(F) + \int_{F}(t-f(x))\,dx.
	\end{equation}
	From Lemma \ref{equivMinimizerAndSolution} and the assumption that $u$ is a minimizer of the functional $\capF_{K,f}$, we have that $u$ is also a solution of the equation
	\begin{equation}\label{nonlocalEq}
		-\Delta^K_1 u = u -f  \quad \text{ in }\mathR^n.
	\end{equation}
	Thus, from Definition \ref{defSolutionNonlocal1Lap}, there exists a function $z_u\in L^{\infty}(\mathR^n \times \mathR^n)$ with $|z_u| \leq 1$ and $z_u$ being antisymmetric such that
	\begin{equation}\label{defSolNonlocalEq}
		\frac{1}{2}\int_{\mathR^n}\int_{\mathR^n}K(x-y)\,z_u(x,\,y)\,(w(x) - w(y))\,dx\,dy = \int_{\mathR^n}(u-f)\,w(x)\,dx
	\end{equation}
	for any $w \in C^{\infty}_c(\mathR^n)$ with $[w]_K(\mathR^n) < \infty$ and moreover 
	\begin{equation}\label{propSolNonlocalEq}
		z_u(x,\,y)(u(y) - u(x)) = |u(y) - u(x)|
	\end{equation}
	for a.e. $x,\,y\in \mathR^n$. From the co-area formula, we have the following two identities:
	\begin{equation}\label{coarea01}
		|u(x) - u(y)| = \int_{-\infty}^{+\infty}|\chi_{\{u > t\}}(x) - \chi_{\{u > t\}}(y)|\,dt
	\end{equation}
	and
	\begin{equation}\label{coarea02}
		(u(x) - u(y)) = \int_{-\infty}^{+\infty}(\chi_{\{u > t\}}(x) - \chi_{\{u > t\}}(y))\,dt
	\end{equation}
	for any measurable $u:\mathR^n \to \mathR$ and a.e. $x,\,y \in \mathR^n$. Thus from \eqref{propSolNonlocalEq}, \eqref{coarea01}, and \eqref{coarea02}, we obtain 
	\begin{equation}\label{identityLevelset}
		z_u(x,\,y)(\chi_{\{u > t\}}(y) - \chi_{\{u > t\}}(x)) = |\chi_{\{u > t\}}(y) - \chi_{\{u > t\}}(x)|
	\end{equation}
	for a.e. $t \in \mathR$. Now we fix $t \in \mathR$ such that \eqref{identityLevelset} holds. From the specific choice of $K(x) = |x|^{-(n+s)}$, the function space $BV_K(\mathR^n)$ coincides with the fractional Sobolev space $W^{s,1}(\mathR^n)$. Recall that the space $C^{\infty}_c(\mathR^n)$ of smooth functions with compact supports is dense in $W^{s,1}(\mathR^n)$ (see \cite{Adam} for the detail). Hence, from the fact that $P_K(\{u>t\})$ and $P_K(F)$ are finite, we can choose sequences $\{\eta_{l}^u\}_{l\in\mathN}$ and $\{\eta_{l}^F\}_{l\in\mathN}$ in $C^{\infty}_c(\mathR^n)$ such that
	\begin{equation}\label{approximationCharacterFunc}
		\eta_{l}^u \xrightarrow[l \to \infty]{} \chi_{\{u>t\}}, \quad \eta_{l}^F \xrightarrow[l \to \infty]{} \chi_{F} \quad \text{in $W^{s,1}(\mathR^n)$}.
	\end{equation}
	From the choice of the approximation, we notice that the difference function $\eta_{l}^u - \eta_{l}^F$ is also in $C^{\infty}_c(\mathR^n)$ and $[\eta_{l}^u - \eta_{l}^F]_K(\mathR^n) < \infty$ for each $l \in \mathN$. Hence, from the definition of solutions to the equation \eqref{nonlocalEq},  we obtain
	\begin{align}\label{identitySubstitutionCompetitor}
		&\int_{\mathR^n}(u-f)\,(\eta_{l}^u - \eta_{l}^F)\,dx \nonumber\\
		&= -\frac{1}{2} \int_{\mathR^n}\int_{\mathR^n}K(x-y)\,z_u(x,\,y)\,[(\eta_{l}^u - \eta_{l}^F)(y) - (\eta_{l}^u - \eta_{l}^F)(x)]\,dx\,dy \nonumber\\
		&= -\frac{1}{2} \int_{\mathR^n}\int_{\mathR^n}K(x-y)\,z_u(x,\,y)\,(\eta_{l}^u(y) - \eta_{l}^u(x))\,dx\,dy \nonumber\\
		&\qquad +  \frac{1}{2} \int_{\mathR^n}\int_{\mathR^n}K(x-y)\,z_u(x,\,y)\,(\eta_{l}^F(y) - \eta_{l}^F(x))\,dx\,dy. 
	\end{align}
	By applying Proposition \ref{comparisonLinfty} and from the assumption that $f \in L^{\infty}(\mathR^n)$, we have that the minimizer $u$ is also in $L^{\infty}(\mathR^n)$ and thus
	\begin{equation}\label{convergenceLinearTerm}
		\left|\int_{\mathR^n}(u-f)(\eta_{l}^u - \eta_{l}^F)\,dx - \int_{\mathR^n}(u-f)(\chi_{\{u>t\}} - \chi_{F})\,dx \right| \xrightarrow[l \to \infty]{} 0.
	\end{equation}
	Hence by applying the dominated convergence theorem and from \eqref{approximationCharacterFunc}, \eqref{identitySubstitutionCompetitor}, and \eqref{convergenceLinearTerm}, we obtain that
	\begin{align}\label{identitySubstitutionLimit}
		&\int_{\mathR^n}(u-f)(\chi_{\{u>t\}} - \chi_{F})\,dx \nonumber\\
		&= \lim_{l \to \infty}\int_{\mathR^n}(u-f)\,(\eta_{l}^u - \eta_{l}^F) \,dx \nonumber\\
		&= -\frac{1}{2} \int_{\mathR^n}\int_{\mathR^n}K(x-y)\,z_u(x,\,y)\,(\chi_{\{u>t\}}(y) - \chi_{\{u>t\}}(x))\,dx\,dy \nonumber\\
		&\qquad -\frac{1}{2} \int_{\mathR^n}\int_{\mathR^n}K(x-y) \,z_u(x,\,y)\,(\chi_{F}(y) - \chi_{F}(x))\,dx\,dy. 
	\end{align} 
	From the definition of $z_u$, we have
	\begin{align}\label{perimeterAnySetF}
		&\frac{1}{2} \int_{\mathR^n}\int_{\mathR^n}K(x-y)\,z_u(x,\,y)\,(\chi_{F}(x) - \chi_{F}(y))\,dx\,dy \nonumber\\
		&\leq \frac{1}{2} \int_{\mathR^n}\int_{\mathR^n}K(x-y)|\chi_{F}(x) - \chi_{F}(y)|\,dx\,dy = P_K(F). 
	\end{align}
	Taking into account \eqref{identityLevelset}, \eqref{identitySubstitutionLimit}, and \eqref{perimeterAnySetF}, we obtain
	\begin{align}\label{periEstimateOverall01}
		&\int_{\mathR^n}(u-f)\,(\chi_{\{u>t\}} - \chi_{F})\,dx \nonumber\\
		&\leq - \frac{1}{2} \int_{\mathR^n}\int_{\mathR^n}K(x-y)|\chi_{\{u>t\}}(x) - \chi_{\{u>t\}}(y)| \,dx\,dy + P_{K}(F). 
	\end{align}
	Regarding the left-hand side of \eqref{periEstimateOverall01}, we have
	\begin{align}\label{estimateLHS}
		\int_{\mathR^n}(u-f)\,(\chi_{\{u>t\}} - \chi_{F}) \,dx &= \int_{\mathR^n}(u-t+t-f)\,(\chi_{\{u>t\}} - \chi_{F}) \,dx \nonumber\\
		&\geq \int_{\{u > t\} \cap F^c} (t-f) \,dx  - \int_{\{u \leq t\} \cap F} (u-f) \,dx  \nonumber\\
		&\geq  \int_{\{u > t\} \cap F^c} (t-f) \,dx  - \int_{\{u \leq t\} \cap F} (t-f) \,dx  \nonumber\\
		&= \int_{\mathR^n}(t-f)\,(\chi_{\{u>t\}} - \chi_{F})\,dx 
	\end{align}
	for a.e. $t\in\mathR$. Hence, from \eqref{periEstimateOverall01} and \eqref{estimateLHS}, we have
	\begin{align}\label{periEstimateOverall02}
		&P_K(\{u>t\}) + \int_{\mathR^n}(t-f)\,\chi_{\{u > t\}}\,dx \nonumber\\
		&= \frac{1}{2} \int_{\mathR^n}\int_{\mathR^n}K(x-y)|\chi_{\{u>t\}}(x) - \chi_{\{u>t\}}(y)|\,dx\,dy + \int_{\mathR^n}(t-f)\,\chi_{\{u > t\}}\,dx  \nonumber\\
		&\leq P_{K}(F) +  \int_{\mathR^n} (t-f)\,\chi_{F}\,dx 
	\end{align}
	for a.e. $t\in\mathR$. Therefore we conclude that the inequality \eqref{minimalityInequality} holds for a.e. $t\in\mathR$. Notice that, for any $t\in\mathR$ such that \eqref{identityLevelset} does not hold, we can choose a sequence $\{t_j\}_{j\in\mathN}$ such that $t_j \rightarrow t$ as $j \to \infty$ and \eqref{identityLevelset} holds for any $t_j$; otherwise we can choose a constant $\delta>0$ such that $B_{\delta}(t) \subset \{t\in\mathR \mid \text{\eqref{identityLevelset} is not true}\}$. Since the condition \eqref{identityLevelset} holds true for a.e. $t \in \mathR$, we have that
	\begin{equation}\nonumber
		0< 2\delta = |B_{\delta}(t)| \leq |\{t\in\mathR \mid \text{\eqref{identityLevelset} is not true}\}| = 0,
	\end{equation}
	which is a contradiction. Thus from the lower semi-continuity of $P_K$ and the continuity of the map $t \mapsto |\{u>t\}|$, we conclude that \eqref{minimalityInequality} holds for every $t\in\mathR$. 
\end{proof}

\subsection{Boundedness of super-level sets of minimizers}\label{secBoundednessSuperLeverlsets}
Let $u \in BV_K \cap L^2(\mathR^n)$ be a minimizer of $\capF_{K,f}$ with a datum $f \in L^p(\mathR^n)$ with $p \in (\frac{n}{s},\,\infty]$. In this section, we show that the super-level set $\{u>t\}$ for each $t \in \mathR$ is bounded up to negligible sets. Precisely, we prove  
\begin{lemma}\label{boundednessMinimizers}
	Assume that the kernel $K(x) = |x|^{-(n+s)}$ for $x \in \mathR^n \setminus \{0\}$ with $s \in (0,\,1)$ and $f \in L^p(\mathR^n)$ with $p \in (\frac{n}{s},\,\infty]$. If $E_T$ is a minimizer of $\capE_{K,f,T}$ among sets with finite volumes for any $T\in\mathR$, then there exists a constant $R_T >0$ such that $|E_T \setminus B_{R_T}|=0$.
\end{lemma}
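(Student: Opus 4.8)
The plan is to show that a minimizer $E_T$ of $\capE_{K,f,T}$ cannot have mass escaping to infinity, by a density-type argument combined with the isoperimetric inequality. Since $E_T$ has finite volume, $|E_T \cap (B_{R+1}\setminus B_R)| \to 0$ as $R\to\infty$; the goal is to turn this into the statement that $E_T \cap B_R^c$ is actually empty (up to null sets) for $R$ large.

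First I would set $m(R) \coloneqq |E_T \setminus B_R|$ and compare $E_T$ with the competitor $E_T \cap B_R$, obtained by cutting off the part outside $B_R$. By minimality,
\begin{equation}\nonumber
	P_K(E_T) + \int_{E_T}(T - f)\,dx \leq P_K(E_T \cap B_R) + \int_{E_T \cap B_R}(T-f)\,dx,
\end{equation}
which after rearranging gives
\begin{equation}\nonumber
	P_K(E_T) - P_K(E_T \cap B_R) \leq \int_{E_T \setminus B_R}(f - T)\,dx \leq |T|\,m(R) + \|f\|_{L^p(E_T\setminus B_R)}\,m(R)^{1-\frac{1}{p}}.
\end{equation}
Next I would extract a quantitative lower bound on the left-hand side. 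Writing $E_R \coloneqq E_T\cap B_R$ and $F_R \coloneqq E_T\setminus B_R$, a direct manipulation of the double integral defining $P_K$ yields $P_K(E_T) - P_K(E_R) \geq P_K(F_R) - 2\int_{F_R}\int_{E_R}K(x-y)\,dx\,dy$, and in fact one gets the cleaner bound $P_K(E_T)\ge P_K(E_R)+P_K(F_R)-2I_R$ where $I_R$ is the interaction term; the point is that the interaction term $I_R$ is controlled by the nonlocal perimeter contribution of the sphere $\partial B_R$ and is small, while $P_K(F_R)$ dominates via the fractional isoperimetric inequality $P_K(F_R) = P_s(F_R) \geq c_n\, m(R)^{\frac{n-s}{n}}$. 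Combining, I obtain a differential inequality of the form
\begin{equation}\nonumber
	c_n\, m(R)^{\frac{n-s}{n}} \leq C\big(m(R) + m(R)^{1-\frac{1}{p}}\big) + (\text{small interaction term}),
\end{equation}
and since $p > \frac{n}{s}$ forces $1 - \frac{1}{p} > \frac{n-s}{n}$, for $R$ large (so that $m(R)$ is small) the terms on the right are of strictly higher order in $m(R)$ than the left-hand side unless $m(R) = 0$. This yields $m(R) = 0$ for all $R$ beyond some $R_T$.

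The main obstacle I anticipate is handling the nonlocal interaction term $I_R = \int_{E_T\setminus B_R}\int_{E_T\cap B_R}|x-y|^{-(n+s)}\,dx\,dy$ carefully: unlike in the local perimeter case, cutting at $B_R$ does not simply remove the interface, and one must verify that $I_R$ is genuinely lower order — the cleanest route is to absorb it using the layer-cake / annular decomposition and the monotonicity of $R\mapsto m(R)$, e.g. via the co-area-type identity $-m'(R) = \mathcal{H}^{n-1}(\partial B_R \cap E_T)$ for a.e. $R$ together with the estimate $I_R \lesssim \int_R^\infty \frac{(-m'(\rho))\, m(\rho)}{(\rho - R)^{\,?}}\,d\rho$; a slicing argument in $R$ then lets one choose a good radius where the interaction is controlled. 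An alternative, perhaps smoother, is to avoid estimating $I_R$ entirely by instead comparing $E_T$ with $E_T \setminus B_R$ versus $E_T\cup B_R$ and using submodularity of $P_K$ (already recalled in the paper as \eqref{submodularNonlocalPeri}) to cancel interaction terms, reducing everything to the isoperimetric inequality and the $L^p$ bound on $f$; I would pursue this submodularity route first since it sidesteps the delicate singular-integral estimates.
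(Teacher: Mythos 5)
Your overall structure matches the paper's (same competitor $E_T\cap B_R$, same use of the fractional isoperimetric inequality on $F_R=E_T\setminus B_R$, same observation that $p>n/s$ forces the exponent $1/q>\frac{n-s}{n}$). You also correctly identify the interaction term $I_R=\int_{E_T\cap B_R}\int_{E_T\setminus B_R}K\,dx\,dy$ as the sole nontrivial obstacle. But both of your suggested ways of handling it contain gaps.

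On the submodularity alternative: applying \eqref{submodularNonlocalPeri} with $E=E_T$, $F=B_R$ does not make $I_R$ disappear. Working out the algebra, submodularity only gives $2I_R\le P_K(B_R)+P_K(E_T\setminus B_R)-P_K(E_T\cup B_R)$, which involves the fixed, non-decaying quantity $P_K(B_R)$ and offers no useful smallness. So this route does not sidestep the singular-integral estimate; it simply fails.

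On the main route: the bound on $I_R$ that you guess, of the form $\int_R^\infty \frac{(-m'(\rho))\,m(\rho)}{(\rho-R)^{\,?}}\,d\rho$, is not the right one — there should be no $m(\rho)$ factor. The correct estimate (via Fubini and the co-area formula, exactly as in the paper) is
\begin{equation}\nonumber
  I_R \;\le\; \frac{|\mathS^{n-1}|}{s}\int_{R}^{\infty}\frac{-m'(\sigma)}{(\sigma-R)^{s}}\,d\sigma .
\end{equation}
Crucially, this is not a power of $m(R)$, so the conclusion you draw — ``for $R$ large the right-hand side is strictly higher order in $m(R)$, hence $m(R)=0$'' — does not follow: one cannot simply absorb $I_R$ pointwise in $R$. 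What the paper actually does is first absorb the $T$- and $f$-terms into the isoperimetric term for large $r$ (possible because $m(r)\to 0$ and $1/q>\frac{n-s}{n}$), arriving at the differential inequality
\begin{equation}\nonumber
  \tfrac{1}{2}C(n,s)\,m(r)^{\frac{n-s}{n}} \;\le\; -C_1\int_{r}^{\infty}\frac{m'(\sigma)}{(\sigma-r)^{s}}\,d\sigma ,
\end{equation}
and then integrates this in $r$ over $(R,\infty)$, exchanges the order of integration, and closes the argument with an ODE-type comparison (following \cite{CNRV, CeNo}) to conclude $m(R)=0$ for $R$ large. Without that integration step and the ensuing comparison argument, your sketch does not yield a contradiction from $m(R)>0$ for all $R$. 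This is the missing idea; once you replace your interaction bound with the correct one and integrate the resulting differential inequality, the proof closes in the way the paper's does.
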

\begin{proof}
	We basically follow the proof shown in \cite[Proposition 3.2]{CeNo}. Suppose by contradiction that $|E_T \setminus B_r| > 0$ for any $r > 0$. By setting $\phi_T(r) \coloneqq |E_T \setminus B_r|$ for any $r >0$, we have 
	\begin{equation}\nonumber
		(\phi_T)'(r) = - \capH^{n-1}(E_T \cap \partial B_r)
	\end{equation}
	for a.e. $r>0$. We fix any $R > 1$. From the minimality of $E_T$, we have
	\begin{equation}\label{inequalityByMinimality00}
		\capE_{K,f,T}(E_T) \leq \capE_{K,f,T}(E_T \cap B_r).
	\end{equation}
	Since it holds that 
	\begin{equation}\nonumber
		P_K(A \cup B) = P_K(A) + P_K(B) - 2 \int_{A}\int_{B} K(x-y)\,dx\,dy
	\end{equation}
	for sets $A,\,B \subset \mathR^n$ with $A\cap B = \emptyset$, we have
	\begin{equation}\label{inequalityByMinimality}
		P_K(E_T\setminus B_r) \leq 2\int_{ E_T \cap B_r}\int_{E_T\setminus B_r
		} K(x-y)\,dx\,dy - \int_{E_T \setminus B_r}(T - f(x))\,dx.
	\end{equation}
	From the isoperimetric inequality for the nonlocal perimeter, we can have the following lower bound of the term of the left-hand side in \eqref{inequalityByMinimality} (see for instance \cite{FFMMM}):
	\begin{equation}\label{isoperiNonlocalPeri}
		P_K(E_T\setminus B_r) \geq \frac{P_K(B_1)}{|B_1|^{\frac{n-s}{n}}}\,|E_T\setminus B_r|^{\frac{n-s}{n}} = C(n,s)\,\phi_T^{\frac{n-s}{n}}(r)
	\end{equation}
	for $r \geq R$, where we set $C(n,s) \coloneqq |B_1|^{-\frac{n-s}{n}}\,P_K(B_1)$. Secondly, from Fubini-Tonelli's theorem and the co-area formula, we can compute the first term of the right-hand side in \eqref{inequalityByMinimality} as follows:
	\begin{align}\label{estifromMini01}
		\int_{E_T \cap B_r}\int_{E_T\setminus B_r} K(x-y)\,dx\,dy &\leq \int_{E_T\setminus B_r}\int_{B_{|y|-r}(y)} \frac{1}{|x-y|^{n+s}}\,dx\,dy \nonumber\\
		&=  \int_{E_T\setminus B_r}|\mathS^{n-1}|\int_{|y|-r}^{\infty} \frac{1}{r^{1+s}}\,dr\,dy \nonumber\\
		&\leq \frac{|\mathS^{n-1}|}{s}\int_{E_T \setminus B_r}(|y|-r)^{-s}\,dy \nonumber\\
		&= \frac{|\mathS^{n-1}|}{s} \int_{r}^{+\infty}\frac{\capH^{n-1}(E_T \cap \partial B_{\sigma})}{(\sigma - r)^s}\,d\sigma \nonumber\\
		&= -\frac{|\mathS^{n-1}|}{s} \int_{r}^{+\infty} \frac{(\phi_T)'(\sigma)}{(\sigma - r)^s}\,d\sigma 
	\end{align}
	for any $r \geq R$. Finally, regarding the second term of the right-hand side in \eqref{inequalityByMinimality}, from the assumption of $f$ and Cauchy-Schwartz inequality (if $p \neq \infty$), we have 
	\begin{align}\label{estifromMini02}
		\int_{E_T \setminus B_r }(-T + f(x))\,dx &\leq T\,|E_T \setminus B_r| + \|f\|_{L^p(\mathR^n)}\,|E_T \setminus B_r|^{\frac{1}{q}} \nonumber\\
		&= T\,\phi_T(r) +  \|f\|_{L^p(\mathR^n)}\,\phi_T^{\frac{1}{q}}(r) < \infty 
	\end{align}
	for any $r \geq R>1$ where $q \geq 1$ satisfies $p^{-1}+q^{-1}=1$. By combining all the computations \eqref{isoperiNonlocalPeri}, \eqref{estifromMini01}, and \eqref{estifromMini02} with \eqref{inequalityByMinimality}, we obtain
	\begin{equation}\label{estiByMinimality}
		C(n,s)\,\phi_T^{\frac{n-s}{n}}(r) \leq -C_1 \int_{r}^{+\infty} \frac{(\phi_T)'(\sigma)}{(\sigma - r)^s}\,d\sigma + T\,\phi_T(r) +  \|f\|_{L^p(\mathR^n)}\,\phi_T^{\frac{1}{q}}(r)
	\end{equation}
	for any $r \geq R$ where we set $C_1 \coloneqq \frac{|\mathS^{n-1}|}{s}$. Since $\phi_T(r)$ vanishes as $r \to \infty$ and $\frac{1}{q} > \frac{n-s}{n}$, we can have that
	\begin{equation}\nonumber
		2T\,\phi_T(r) + 2\|f\|_{L^p(\mathR^n)}\phi_T^{\frac{1}{q}}(r) \leq C(n,s)\,\phi_T^{\frac{n-s}{n}}(r)
	\end{equation}
	for sufficiently large $r \geq R$. Hence, by integrating the both sides of \eqref{estiByMinimality} over $r \in (R,\infty)$, we obtain
	\begin{equation}\label{estiByMinimality01}
		\frac{C(n,s)}{2}\int_{R}^{\infty}\phi_T^{\frac{n-s}{n}}(r)\,dr \leq -C_1 \int_{R}^{\infty}\int_{r}^{+\infty} \frac{(\phi_T)'(\sigma)}{(\sigma - r)^s}\,d\sigma\,dr.
	\end{equation}
	By exchanging the order of the integration, we have
	\begin{equation}\label{exchangeIntergration}
		\int_{R}^{\infty}\int_{r}^{+\infty} \frac{(\phi_T)'(\sigma)}{(\sigma - r)^s}\,d\sigma\,dr = \int_{R}^{\infty}\int_{R}^{\sigma} \frac{(\phi_T)'(\sigma)}{(\sigma - r)^s}\,dr\,d\sigma.
	\end{equation}
	Then by employing the similar computation shown in \cite{CeNo}, we obtain
	\begin{equation}\nonumber
		\int_{R}^{\infty}\int_{R}^{\sigma} \frac{(\phi_T)'(\sigma)}{(\sigma - r)^s}\,dr\,d\sigma \geq -\frac{\phi_T(R)}{1-s} - \int_{R+1}^{\infty} \frac{\phi_T(r)}{(\sigma - R)^s}\,d\sigma. 
	\end{equation}
	Therefore, from \eqref{estiByMinimality01}, we have
	\begin{align}
		\frac{C(n,s)}{2}\int_{R}^{\infty}\phi_T^{\frac{n-s}{n}}(r)\,dr &\leq C_1\frac{\phi_T(R)}{1-s} + C_1\int_{R+1}^{\infty} \frac{\phi_T(\sigma)}{(\sigma - R)^s}\,d\sigma \nonumber\\
		&\leq C_1\frac{\phi_T(R)}{1-s} + C_1\int_{R+1}^{\infty}\phi_T(\sigma)\,d\sigma. \nonumber
	\end{align}
	Again, by choosing $R$ sufficiently large so that the inequality 
	\begin{equation}\nonumber
		C_1\int_{R+1}^{\infty}\phi_T(r)\,dr \leq \frac{C(n,s)}{4}\int_{R}^{\infty}\phi_T^{\frac{n-s}{n}}(r)\,dr
	\end{equation}
	holds, we have
	\begin{equation}\nonumber
		\int_{R}^{\infty}\phi_T^{\frac{n-s}{n}}(r)\,dr \leq \frac{4C_1}{C(n,s)(1-s)}\,\phi_T(R).
	\end{equation}
	Then by applying the method shown in, for instance, \cite{CNRV, CeNo}, we obtain the contradiction to the assumption that $\phi_T(r)>0$ for any $r>0$. Therefore, we conclude the essential boundedness of the set $E_T$.
\end{proof}
We assume that $u \in BV_K \cap L^2(\mathR^n)$ is a minimizer of the functional $\capF_{K,f}$ and $u$ is bounded from below with the constant $c \in \mathR$. Then, since the super-level set $\{u > c\}$ is also a minimizer of $\capE_{K,f,c}$, we may obtain from Lemma \ref{boundednessMinimizers} that there exists a constant $R_c>1$ such that $|\{u>c\} \setminus B_{R_c}| = 0$. In addition to this, we have the inclusion of the super-level sets that $\{u>t'\} \subset \{u>t\}$ for any $t' > t$. Thus, we conclude that the following corollary holds.
\begin{corollary}\label{corUniformBoundednessMinimizer}
	Assume that the kernel $K(x) = |x|^{-(n+s)}$ for $x \in \mathR^n \setminus \{0\}$ with $s \in (0,\,1)$. Let $u \in BV_K \cap L^2(\mathR^n)$ be a minimizer of $\capF_{K,f}$. If a datum $f$ is in $L^p(\mathR^n)$ with $p \in (\frac{n}{s},\,\infty]$ and $u \geq c$ a.e. in $\mathR^n$ for some $c \in \mathR$, then the super-level set $\{u>t\}$ is uniformly bounded with respect to $t \geq c$. Namely, there exists $R_c>0$, independent of $t$, such that $\{u>t\} \subset B_{R_c}$ for any $t \geq c$.
\end{corollary}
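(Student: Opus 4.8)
The plan is to obtain the statement by chaining together, essentially verbatim, the three facts already established in this section. First, since $u$ is a minimizer of $\capF_{K,f}$ and $u \ge c$ a.e. in $\mathR^n$, I would apply Lemma \ref{relationMiniTwoEnergies} at the single level $t=c$ to conclude that $E_c := \{u>c\}$ is a minimizer of the prescribed nonlocal curvature functional $\capE_{K,f,c}$ among measurable sets, hence in particular among sets of finite volume. Before invoking Lemma \ref{boundednessMinimizers} one must check that $E_c$ itself has finite Lebesgue measure, so that it genuinely belongs to the competitor class of that lemma; this is where a small amount of care is needed. When $c\ge 0$ it follows at once from Chebyshev's inequality applied to $u\in L^2(\mathR^n)$, since $|E_c|=|\{u>c\}|\le c^{-2}\|u\|_{L^2}^2$ for $c>0$ (and for $c=0$ one passes to a slightly higher level, using the monotonicity below, or notes that $f\in L^2\cap L^\infty$ in the case of interest so the fidelity term already forces $|\{u>0\}|<\infty$); for general $c$ one either reduces to $c\ge 0$ or reads finiteness off the minimality inequality of Lemma \ref{relationMiniTwoEnergies} tested against the empty competitor.

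Second, with $|E_c|<\infty$ in hand I would apply Lemma \ref{boundednessMinimizers} with $T=c$ to the minimizer $E_c$ of $\capE_{K,f,c}$; it produces a radius $R_c>1$, depending only on $n$, $s$, $c$ and $\|f\|_{L^p(\mathR^n)}$, such that $|E_c\setminus B_{R_c}|=0$. Third, I would invoke the elementary monotonicity of super-level sets: for every $t\ge c$ one has $\{u>t\}\subseteq\{u>c\}=E_c$, whence $|\{u>t\}\setminus B_{R_c}|=0$ for all $t\ge c$ with one and the same radius $R_c$. Since $u$ is an element of $L^2(\mathR^n)$, its super-level sets are defined only up to Lebesgue-null sets, so ``$|\{u>t\}\setminus B_{R_c}|=0$'' is exactly the assertion ``$\{u>t\}\subset B_{R_c}$''; equivalently, one fixes the precise representative of $u$ and absorbs the null set into that choice.

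I do not expect a genuine obstacle here: the content is entirely contained in Lemmas \ref{relationMiniTwoEnergies} and \ref{boundednessMinimizers}, and the corollary is essentially the remark that, because these are applied only once, at the bottom level $t=c$, the resulting radius is automatically independent of $t$. The two points deserving a line of justification are therefore (i) the finiteness of $|E_c|$, needed so that $E_c$ qualifies as an admissible set in Lemma \ref{boundednessMinimizers}, and (ii) the uniformity, which is immediate from the nesting $\{u>t'\}\subset\{u>t\}$ for $t'>t\ge c$.
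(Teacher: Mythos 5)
Your chain of reasoning is exactly the paper's: the passage immediately preceding the corollary says precisely that $\{u>c\}$ minimizes $\capE_{K,f,c}$ by Lemma \ref{relationMiniTwoEnergies}, that Lemma \ref{boundednessMinimizers} then gives $R_c>1$ with $|\{u>c\}\setminus B_{R_c}|=0$, and that the nesting $\{u>t'\}\subset\{u>t\}$ for $t'>t$ promotes this to uniform boundedness for all $t\ge c$. The one place you go beyond the paper is in flagging that $|\{u>c\}|<\infty$ must be checked before Lemma \ref{boundednessMinimizers} applies; this is a legitimate and useful observation, since the proof of that lemma uses that $\phi_T(r)\to 0$ as $r\to\infty$, which fails if $E_T$ has infinite measure. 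However, your suggested remedies for $c\le 0$ do not actually close the gap: for $c=0$, passing to levels $t=\varepsilon>0$ and shrinking $\varepsilon$ does not give a radius that is uniform as $\varepsilon\downarrow 0$, and the fidelity term alone does not force $|\{u>0\}|<\infty$ (one could have $u,f>0$ on a set of infinite measure while $\int(u-f)^2<\infty$); testing the minimality inequality against $F=\emptyset$ yields $P_K(E_c)\le\int_{E_c}(f-c)$, which forces $|E_c|<\infty$ only when $c>0$. In fact the corollary, read literally, fails for $c$ strictly below the essential infimum of $u$ (take $f\equiv 0$, so $u\equiv 0$, and $c<0$: then $\{u>t\}=\mathR^n$ for every $t\in(c,0)$), so the implicit hypothesis that $|\{u>c\}|<\infty$ is genuinely needed; this is a subtlety of the paper's statement rather than of your argument, but since you raised the point, it is worth recording that it cannot be dispatched quite as easily as your parenthetical suggests.
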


\section{H\"older regularity of minimizers}\label{secLocalHolderConti}

First of all, we observe that, if $u$ satisfies some equation associated with the Euler-Lagrange equations of $\capE_{K,f,t}$ and the boundary of $\{u>t\}$ is regular, then $u$ is continuous. 

\begin{proposition}\label{continuityMinimizersLemma}
	Assume that $K(x) = |x|^{-(N+s)}$ for any $x \in \mathR^n$ with $s \in (0,\,1)$ and the datum $f$ is in $L^2 \cap L^{\infty}(\mathR^n)$. 
	Let $u \in BV_K \cap L^2(\mathR^n)$. Assume that $\partial \{u > t\}$ is of class $C^{1,\alpha}$ with $\alpha \in (s,\,1]$ and $u$ satisfies the equation
	\begin{equation*}
		H^s_{\{u>t\}}(x) + t - f(x) = 0
	\end{equation*}
	for any $x \in \partial \{u>t\}$ and $t\in\mathR$. Then $u$ is continuous in $\mathR^n$.
\end{proposition}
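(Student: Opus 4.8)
The plan is to argue by contradiction, exploiting the fact that the prescribed nonlocal mean curvature equation forbids the boundaries of two \emph{distinct} super-level sets from touching.

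\textbf{Choice of representative and reduction to upper semicontinuity.} First I would fix the canonical pointwise representative of $u$: by Proposition~\ref{comparisonLinfty} the minimizer is bounded, so after replacing each $\{u>t\}$ by the open set with $C^{1,\alpha}$ boundary supplied by the hypothesis and setting $u(x)\coloneqq \sup\{t\mid x\in\{u>t\}\}$, the function $u$ is finite everywhere, bounded, and lower semicontinuous on $\mathR^n$ (each $\{u>a\}=\bigcup_{t>a}\{u>t\}$ is open). Hence it suffices to prove that $u$ is also upper semicontinuous. Note that the $C^{1,\alpha}$ description of $\partial\{u>t\}$ holds for a.e.\ level $t$ after this adjustment, which is all that will be needed.

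\textbf{Extracting a common boundary point.} Suppose upper semicontinuity fails at some $x_0\in\mathR^n$, i.e.\ $\limsup_{x\to x_0}u(x)>u(x_0)$. I would then pick two levels $t_1<t_2$, both among the a.e.\ set of levels for which the $C^{1,\alpha}$ description is valid, with $u(x_0)<t_1<t_2<\limsup_{x\to x_0}u(x)$. Since $u(x_0)<t_i$, the point $x_0$ lies outside the open set $\{u>t_i\}$; on the other hand there is a sequence $x_k\to x_0$ with $u(x_k)>t_2$, hence $x_k\in\{u>t_2\}\subseteq\{u>t_1\}$, so $x_0\in\overline{\{u>t_2\}}\subseteq\overline{\{u>t_1\}}$. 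Therefore $x_0\in\partial\{u>t_1\}\cap\partial\{u>t_2\}$, and by monotonicity of super-level sets $\{u>t_2\}\subseteq\{u>t_1\}$.

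\textbf{Curvature comparison at $x_0$.} Now I compare the nonlocal mean curvatures. Since $\partial\{u>t_i\}$ is of class $C^{1,\alpha}$ with $\alpha>s$, each $H^s_{\{u>t_i\}}(x_0)$ is a finite number (as recalled right after \eqref{definitionNonlocalMC}). Writing $E\coloneqq\{u>t_1\}$ and $F\coloneqq\{u>t_2\}$, from $F\subseteq E$, $x_0\in\partial E\cap\partial F$, and the finiteness of both principal values, one may recombine the two principal-value integrals into a single integral of a non-negative function and pass to the limit by monotone convergence, obtaining
\[
H^s_{F}(x_0)-H^s_{E}(x_0)=\int_{\mathR^n}\frac{2\,\chi_{E\setminus F}(y)}{|x_0-y|^{n+s}}\,dy\ \geq\ 0 ,
\]
the integral being finite precisely because $\alpha>s$. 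On the other hand, the assumed equation $H^s_{\{u>t\}}+t-f=0$ on $\partial\{u>t\}$ gives $H^s_{E}(x_0)=f(x_0)-t_1$ and $H^s_{F}(x_0)=f(x_0)-t_2$, so $H^s_{F}(x_0)-H^s_{E}(x_0)=t_1-t_2<0$, contradicting the displayed inequality. Thus $u$ is upper semicontinuous, hence continuous on $\mathR^n$.

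\textbf{Expected difficulties.} The curvature comparison itself is soft once the geometric configuration is in place; the delicate part is the bookkeeping in the first two steps, namely verifying that the level-set representative can be chosen so that every $\{u>t\}$ coincides, as an open set, with the given $C^{1,\alpha}$ domain (replacing $\{u>t\}$ by $\bigcup_{s>t}\{u>s\}$, which alters only a null set of levels), and that the failure of upper semicontinuity genuinely yields the common boundary point. It is exactly here, and in the recombination of the principal-value integrals, that the hypothesis $\alpha>s$ (equivalently $\beta>1-s$ in Theorem~\ref{mainTheorem}) enters, through finiteness of the nonlocal mean curvature on $C^{1,\alpha}$ hypersurfaces. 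Finally, I would stress that the whole argument is purely local, so in the proof of the main theorem it applies verbatim near any point where the relevant level sets are known to be $C^{1,\alpha}$ and $f$ is bounded.
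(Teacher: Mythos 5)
Your proof is correct and follows the same strategy as the paper: discontinuity at a point $x_0$ forces $x_0\in\partial\{u>t_1\}\cap\partial\{u>t_2\}$ for two distinct levels, after which the monotonicity of the nonlocal mean curvature under set inclusion, combined with the prescribed-curvature equation, yields the contradiction $t_1-t_2\geq 0$. Your presentation of the first step (fixing the lower semicontinuous representative $u(x)=\sup\{t:x\in E_t\}$ and reducing to upper semicontinuity, which immediately places $x_0$ on both $\partial E_{t_1}$ and $\partial E_{t_2}$) is in fact cleaner than the paper's case analysis via $t_\pm=\limsup/\liminf_{x\to x_0}u$; the only cosmetic discrepancies are the ``a.e.\ levels'' caveat, which is unnecessary once you observe you can use the given open sets $E_t$ directly rather than the redefined super-level sets, and the factor $2$ in your displayed identity, which comes from the convention $H^s_E=\mathrm{p.v.}\int(\chi_{E^c}-\chi_E)/|x-y|^{n+s}$ rather than the paper's \eqref{definitionNonlocalMC}.
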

\begin{proof}
	Suppose by contradiction that $u$ is not continuous in $\mathR^n$. Then there exist a point $x_0 \in \mathR^n$ and $-\infty< t' < t < \infty$ such that $x_0 \in \partial E_t \cap \partial E_{t'}$. Indeed, if $u$ is not continuous at $x_0$, then it holds that $t_{+} \coloneqq \limsup_{x \to x_0}u(x) > \liminf_{x \to x_0}u(x) \eqqcolon t_{-}$. Note that $t_{+} \geq u(x_0) \geq t_{-}$ by definition. 
	Setting $\delta \coloneqq t_{+} - t_{-}>0$ and the definition of $t_{+}$, we can choose a sequence $\{x_n\}_{n\in\mathN}$ such that $x_k \to x_0$ and $u(x_k) > t_{+} - \frac{\delta}{2^{k}}$ for any $k\in\mathN$ with $k \geq 1$. If $u(x_0) = t_{+}$, then we have that $x_k \in \{u > u(x_0) - \frac{\delta}{2}\}$ for large $k\in\mathN$. Thus we obtain that $x_0 \in \overline{\{u > u(x_0) - \frac{\delta}{2} \}}$. However, from the definition of $\delta$, $x_0$ cannot be a interior point of $\{u > u(x_0) - \frac{\delta}{2} \}$; otherwise we can choose a sequence $\{y_k\}_{k\in\mathN}$ such that 
	\begin{equation}
		u(x_0) - \frac{\delta}{2} < u(y_k) < t_{-} + \frac{\delta}{2^{k}}
	\end{equation}
	for any large $k$. From the definition of $\delta$ and the fact that $u(x_0)=t_{+}$, we obtain a contradiction. Thus we may assume that $u(x_0) < t_{+}$. Setting $\tilde{\delta} \coloneqq t_{+} - u(x_0)>0$ and since $u(x_k) > t_{+} - \frac{\delta}{2^{k}}$ for any $k\in\mathN$, we have that $u(x_k) > u(x_0) + \frac{1}{2}\tilde{\delta}$ for any $k\in\mathN$ with $k \geq (2\delta)^{-1}\tilde{\delta}$ and that $x_k \in \{u > u(x_0) + \frac{1}{2}\tilde{\delta}\}$ for large $k\in\mathN$. Hence, recalling that $x_k \to x_0$ as $k\to\infty$, we obtain that $x_0 \in \partial \{u > u(x_0) + \frac{1}{2}\tilde{\delta} \}$. In the same way, we can show that $x_0 \in \partial \{u > u(x_0) + \frac{3}{4}\tilde{\delta} \}$. Therefore, we conclude that, if $u$ is not continuous at $x_0$, we can find distinct constants $t,\,t'\in\mathR$ such that $x_0 \in \partial \{u>t\} \cap \partial \{u>t'\}$.
	
	From the assumptions, we obtain that the following equations hold:
	\begin{equation}\label{eulerLagrange01}
		H_{E_t}^s(x) + t - f(x)  = 0
	\end{equation}
	and
	\begin{equation}\label{eulerLagrange02}
		H_{E_{t'}}^s(x) + t' - f(x) = 0
	\end{equation} 
	for each $x \in \partial E_t \cap \partial E_{t'}$. Recall that the nonlocal mean curvature associated with $K(x) = |x|^{-(N+s)}$ is well-defined at each point on $\partial E$ if $\partial E$ is at least of class $C^{1,\alpha}$ with $\alpha>s$ (see, for instance, \cite[Corollary 3.5]{Cozzi}).
	
	Now we can readily see that, if two sets $E$ and $F$ satisfy that $E \subset F$ and $\partial E \cap \partial F \not= \emptyset$, then it holds that $H^s_{E} \geq H^s_{F}$ on $\partial E \cap \partial F$. Indeed, by definition, we have
	\begin{align}\label{nonlocalMeanCurvature}
		H^s_{E}(x) - H^s_{F}(x) &= \text{P.V.}\,\int_{\mathR^n}\frac{\chi_{E}(x) - \chi_{E}(y)}{|x-y|^{N+s}}\,dy \nonumber\\
		&\qquad - \text{p.v.}\,\int_{\mathR^n}\frac{\chi_{F}(x) - \chi_{F}(y)}{|x-y|^{N+s}}\,dy \nonumber\\
		&= \text{p.v.}\,\int_{\mathR^n}\frac{\chi_{E}(x) - \chi_{F}(x) - \chi_{E}(y) + \chi_{F}(y)}{|x-y|^{N+s}}\,dy  
	\end{align}
	for any $x \in \partial E \cap \partial F$. Since $E \subset F$, it holds $\chi_E \leq \chi_F$ in $\mathR^n$ and $\chi_E(x)=\chi_F(x)$ for any $x \in \partial E \cap \partial F$. Thus from \eqref{nonlocalMeanCurvature} and the non-negativity of $K$, we obtain the claim.
	
	Therefore, from \eqref{eulerLagrange01},  \eqref{eulerLagrange02}, and the fact that $H^s_{E_{t'}} \geq H^s_{E_{t}}$, we obtain
	\begin{equation}
		t' - f(x_0) \geq t - f(x_0) \nonumber
	\end{equation}
	and it turns out that $t' \geq t$. This contradicts the fact that $t' < t$.

\end{proof}

\subsection{Regularity of boundaries of super-level sets for minimizers}
Now we show some regularity results of the boundary of the set $\{u>t\}$ for each $t$ under suitable assumptions on the datum $f$, where $u$ is a minimizer of $\capF_{K,f}$ with $K(x)=|x|^{-(N+s)}$. From Proposition \ref{comparisonLinfty}, we have that $u \in L^{\infty}(\mathR^n)$ whenever $f \in L^2 \cap L^{\infty}(\mathR^n)$. Since $\{u>t\} = \mathR^n$ if $t < - \|u\|_{L^{\infty}}$ and $\{u>t\} = \emptyset$ if $t \geq \|u\|_{L^{\infty}}$, in the sequel, we focus on the set $\{u>t\}$ only for $t \in [-\|u\|_{L^{\infty}}, \, \|u\|_{L^{\infty}})$ if $f \in L^{\infty}(\mathR^n)$. Recall that, from Corollary \ref{corUniformBoundednessMinimizer}, the super-level set $\{u>t\}$ is bounded uniformly in $t \in [-\|u\|_{L^{\infty}}, \, \|u\|_{L^{\infty}})$.

To obtain our main result on the regularity of minimizers, we exploit the regularity results proved by Caputo and Guillen \cite{CaGu}; Figalli, Fusco, Maggi, Millot, and Morini \cite{FFMMM}; Savin and Valdinoci \cite{SaVa}; and Barrios, Figalli, and Valdinoci \cite{BFV}.

Before recalling the regularity results, we give the definition of ``almost'' minimizers of $P_s$ in the sense of Figalli, et al \cite{FFMMM}. Given $\Lambda>0$, we say that a measurable bounded set $E \subset \mathR^n$ is an {\it almost minimizer} of $P_s$ if
\begin{equation}\label{definitionAlmostMinimizer}
	P_s(E) \leq P_s(F) + \frac{\Lambda}{1-s}|E \Delta F|	
\end{equation} 
for any measurable bounded set $F \subset \mathR^n$. Note that the concept of the almost minimality of $P_s$ was also given by Caputo and Guillen \cite{CaGu} and their definition can include a wider variety of sets than the definition by Figalli, et al. \cite{FFMMM}. In this paper, it is sufficient to apply the definition given by Figalli, et al \cite{FFMMM} and thus we do not write the definition given by Caputo and Guillen \cite{CaGu} here.

First, we recall the regularity of almost minimizers of $P_s$ in the sense of \eqref{definitionAlmostMinimizer}, which was shown by Figalli, Fusco, Maggi, Millot, and Morini \cite[Corollary 3.5]{FFMMM} (see also \cite{CaGu}). This result
is a nonlocal analogue of the theory of Tamanini \cite{Tamanini} on almost minimal surfaces.

\begin{theorem}[\cite{FFMMM}]\label{improvementFlatnessFFMMM}
	If $n \geq 2$, $\Lambda > 0$, and $s_0 \in (0,\,1)$, then there exist positive constants $0 < \varepsilon_0 < 1$, $C_0>0$, and $\alpha < 1$, depending on $n$, $\Lambda$, and $s_0$ only, with the following property: if $E$ is an almost minimizer of $P_s$ with $s \in (s_0, \,1)$ in the sense of \eqref{definitionAlmostMinimizer}, then $\partial E$ is of class $C^{1,\alpha}$ for some $0 < \alpha < 1$ except a closed set of Hausdorff dimension $n-2$.
\end{theorem}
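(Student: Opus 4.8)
Since this statement is quoted verbatim from \cite[Corollary 3.5]{FFMMM}, the plan is to recall how the nonlocal adaptation of Tamanini's almost-minimal surface theory \cite{Tamanini} proceeds, reducing the regularity of almost minimizers of $P_s$ to the regularity of genuine minimizers established in \cite{CRS, SaVa}. First I would establish uniform density estimates: testing the almost-minimality \eqref{definitionAlmostMinimizer} with the competitors $E\cup B_r(x)$ and $E\setminus B_r(x)$ at a point $x\in\partial E$ and invoking the fractional isoperimetric inequality yields $|E\cap B_r(x)|\ge c_0|B_r|$ and $|B_r(x)\setminus E|\ge c_0|B_r|$ for all small $r$, with $c_0=c_0(n,s_0)$, hence also $P_s(E;B_r(x))\le C r^{n-s}$. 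The point to record here is that the penalization is genuinely lower order: at scale $r$ one has $\frac{\Lambda}{1-s}|E\Delta F|\le C r^n = C r^s\cdot r^{n-s}$, a factor $r^s$ below the natural $P_s$-scaling, so on small balls an almost minimizer is a controlled perturbation of a true $P_s$-minimizer.

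Next I would run the improvement-of-flatness scheme. Introduce the scale-invariant excess $\mathbf{e}(E,x,r)$ measuring the $L^2$ oscillation of the measure-theoretic normal of $E$ in $B_r(x)$, and prove the dichotomy: there are $\varepsilon_0,\theta\in(0,1)$ depending only on $n,\Lambda,s_0$ such that $\mathbf{e}(E,x,1)\le\varepsilon_0$ implies $\mathbf{e}(E,x,\theta)\le\tfrac12\mathbf{e}(E,x,1)+C\theta^{\gamma}$ for some $\gamma>0$. This is proven by compactness and contradiction: rescaling a sequence of almost minimizers with vanishing excess gives, in the limit, a flat minimizer of $P_s$ (the lower-order bound above being what kills the penalization and preserves minimality through the nonlocal tails), and the $C^{1,\alpha}$-regularity — indeed smoothness — of flat $s$-minimal surfaces from \cite{CRS} transfers the improvement back to the sequence. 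Iterating the dichotomy over dyadic scales produces a Campanato-type decay $\mathbf{e}(E,x,r)\le C r^{2\alpha}$ on the relatively open set $\partial^{*}E$ of points with small excess, for some $\alpha=\alpha(n,\Lambda,s_0)<1$, whence $\partial^{*}E$ is an embedded $C^{1,\alpha}$ hypersurface.

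For the singular set I would close with a Federer dimension-reduction argument: $\Sigma\coloneqq\partial E\setminus\partial^{*}E$ is closed, every blow-up of $E$ at a point of $\Sigma$ is a nontrivial minimizing cone for $P_s$, and the cone-classification results of \cite{SaVa, BFV} (no singular minimizing $s$-cones in the relevant low dimensions) force $\Sigma$ to have Hausdorff dimension at most $n-2$. The hard part is the improvement-of-flatness/compactness step: because $P_s$ is nonlocal it cannot be localized, so one must show that the blow-up limits are genuine minimizers in spite of the far-field contributions and the $\Lambda$-term, and one must keep every constant uniform down to $s=s_0$ (this is where the $s$-dependence of the fractional Sobolev and isoperimetric constants must be tracked). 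For the present paper all of this is exactly \cite[Corollary 3.5]{FFMMM} (see also \cite{CaGu}), which we use as a black box.
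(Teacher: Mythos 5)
The paper does not prove this statement; it is quoted verbatim from \cite[Corollary 3.5]{FFMMM} and used as a black box, which is exactly what you do, so at the level the paper requires your treatment is complete. Your outline of the underlying argument (density estimates from testing \eqref{definitionAlmostMinimizer} against $E\cup B_r$ and $E\setminus B_r$, observing that the $\Lambda$-term scales like $r^s$ below the perimeter, a compactness/improvement-of-flatness iteration that reduces to the minimizer theory of \cite{CRS}, then Federer dimension reduction) is the correct skeleton of the FFMMM proof and identifies the genuinely delicate point, namely that blow-up limits must be shown to be honest $P_s$-minimizers despite the nonlocal tails and that all constants must be tracked uniformly in $s\ge s_0$.

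One small inaccuracy worth flagging: in the dimension-reduction step you invoke ``the cone-classification results of \cite{SaVa, BFV}'' to get Hausdorff dimension $\le n-2$. For the bound $n-2$ stated in this theorem no cone classification beyond \cite{CRS} is needed (the absence of singular one-dimensional minimizing cones is trivial, and dimension reduction then gives $n-2$ directly); the Savin--Valdinoci classification of two-dimensional $s$-minimal cones is what upgrades $n-2$ to $n-3$, and the paper does invoke it, but separately, in Corollary~\ref{corNonlocalCone2d} and Lemma~\ref{holderRegualrityLemma}. Also, \cite{BFV} is a bootstrap-regularity result for integro-differential equations (used later in the paper as Theorem~\ref{theoremBootstrapBFV} to go from $C^{1,\alpha}$ to $C^{2,s+\alpha-1}$), not a cone-classification result, so it does not belong in that sentence.
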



Next we recall the regularity result of fractional minimal cones in $\mathR^2$ by Savin and Valdinoci \cite{SaVa}.
\begin{theorem}[\cite{SaVa}] \label{thmNonlocalCone2d}
	Assume that $E \subset \mathR^2$ is a $s$-fractional minimal cone, namely, $E$ satisfies that $E = t\,E$ for any $t >0$. Then $E$ is a half-plane.
\end{theorem}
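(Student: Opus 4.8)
The plan is to prove Theorem \ref{thmNonlocalCone2d} in three steps: first identify $\partial E$, away from the origin, as a finite union of rays, so that $E$ is a union of sectors; then reformulate minimality as the vanishing of the nonlocal mean curvature at the corresponding points of $\mathS^1$; and finally use a comparison with tangent halfplanes to exclude every configuration except the semicircle.

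\emph{Step 1.} Since $E$ is a (locally) $P_s$‑minimizing cone, the interior regularity theory for $s$‑minimal sets (Theorem~\ref{improvementFlatnessFFMMM} and \cite{CaGu}, together with standard bootstrapping) shows that $\partial E$ is smooth away from a relatively closed singular set of Hausdorff dimension at most $0$; since that singular set is invariant under every dilation $x\mapsto\lambda x$, it can only be contained in $\{0\}$. Thus $\partial E\setminus\{0\}$ is a smooth, properly embedded $1$‑manifold in $\mathR^2\setminus\{0\}$ which is dilation invariant, so each of its connected components, being a connected $1$‑manifold that contains — hence locally equals — a ray through the origin, is such a ray; by the uniform density estimates for $s$‑minimal sets (\cite{CRS}) there are only finitely many of them. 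Hence $E$ agrees up to a null set with a finite union of open sectors and is determined by the finite data $E\cap\mathS^1=\bigcup_i A_i$ (disjoint open arcs), with complementary arcs $E^c\cap\mathS^1=\bigcup_j B_j$.

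\emph{Step 2.} Minimality of $E$ and smoothness of $\partial E\setminus\{0\}$ give $H^s_E\equiv 0$ on $\partial E\setminus\{0\}$, and by the homogeneity $H^s_E(\rho\,\omega)=\rho^{-s}H^s_E(\omega)$ this is equivalent to $H^s_E(\omega)=0$ for every $\omega$ in the finite set $\partial E\cap\mathS^1$. For such an $\omega$, let $H_\omega$ be the halfplane tangent to $E$ at $\omega$ on the side of $E$; since $E$ and $H_\omega$ coincide in a neighbourhood of $\omega$ and $H^s_{H_\omega}(\omega)=0$, subtracting the two curvatures yields the absolutely convergent representation
\[
H^s_E(\omega)=\int_{\mathR^2}\frac{\chi_{H_\omega}(y)-\chi_E(y)}{|\omega-y|^{2+s}}\,dy,
\]
an explicit functional of the arc endpoints.

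\emph{Step 3.} Suppose $E$ is not a halfplane, i.e.\ $E\cap\mathS^1$ is not a semicircle. Since $\sum_i|A_i|+\sum_j|B_j|=2\pi$, the shortest of all these arcs has length $\ell_*<\pi$, and since $P_s(E)=P_s(E^c)$ we may assume it is an arc $C=(\alpha,\beta)$ of $E\cap\mathS^1$, so $\alpha=\beta-\ell_*>\beta-\pi$. In the model case of a single arc this forces $E\subset H_\beta$ with $|H_\beta\setminus E|>0$; the integrand above is then nonnegative and not identically zero, so $H^s_E(\beta)>0$, contradicting Step~2. In general $H_\omega$ need not contain $E$ and one must argue as in \cite{SaVa}: one evaluates $H^s_E$ at the endpoints of the shortest arcs of $E$ and of $E^c$ and shows, using the exact dilation invariance of the cone to renormalise the contribution of the far‑away sectors and the minimality of $\ell_*$ to sign the remaining terms, that $H^s_E$ cannot vanish at all of them simultaneously. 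I expect this last point — disentangling the long‑range interaction between the thinnest sector and the other, infinitely extended, sectors — to be the main obstacle; it is the analytic core of \cite{SaVa}, and it is where being in two dimensions is essential. Granting it, no nontrivial section other than the semicircle is compatible with minimality, so $E$ is a halfplane.
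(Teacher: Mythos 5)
The paper does not prove this statement: it is imported verbatim from Savin and Valdinoci \cite{SaVa}, so there is no ``paper's proof'' to compare against, and your proposal must be assessed on its own terms.

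Your Steps 1 and 2 are sound: the $C^{1,\alpha}$ regularity of \cite{CRS, CaGu, FFMMM} plus dilation invariance does reduce the problem to a finite union of sectors, and stationarity on the smooth part does give $H^s_E(\omega)=0$ at each $\omega\in\partial E\cap\mathS^1$, with the convergent tangent-halfplane representation you write. Step 3, however, is not merely unfinished in the way you flag --- the strategy itself cannot succeed. After Steps 1--2 the only information you retain is that $E$ is a cone, a finite union of sectors, and \emph{stationary} (first variation zero). But stationarity does not single out halfplanes. Consider the double sector $E_0=\{(x_1,x_2)\in\mathR^2 : x_1x_2>0\}$ and $\omega=(1,0)\in\partial E_0$. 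The reflection $R(x_1,x_2)=(x_1,-x_2)$ is an isometry fixing $\omega$ and exchanging $E_0$ with $E_0^c$ up to a null set; changing variables by $R$ in
\begin{equation*}
H^s_{E_0}(\omega)=\mathrm{p.v.}\int_{\mathR^2}\frac{\chi_{E_0^c}(y)-\chi_{E_0}(y)}{|\omega-y|^{2+s}}\,dy
\end{equation*}
yields $H^s_{E_0}(\omega)=-H^s_{E_0}(\omega)$, hence $H^s_{E_0}(\omega)=0$, and by the symmetries of $E_0$ the same holds at all four boundary rays. Thus $E_0$ is a nontrivial stationary cone at which your Step~2 hypothesis is satisfied but the desired conclusion fails, so no sign bookkeeping of the arc endpoints, however clever, can finish the proof from there: one must invoke genuine minimality, not just the Euler--Lagrange equation. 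This is also why your attribution of Step 3 to \cite{SaVa} is inaccurate: Savin and Valdinoci do not compare nonlocal curvatures at the vertices on $\mathS^1$. Their argument is a perturbation/translation comparison, in which one slides $E$ by a cutoff vector field supported on a large ball, estimates the change of $P_s$ by a quantity that tends to zero as the ball grows, and combines this with a strong-maximum-principle/rigidity statement for pairs of ordered $s$-minimal sets to conclude that $\partial E$ and any of its translates cannot cross; this crucially exploits the minimizing property (so in particular it excludes $E_0$). If you want to keep your framework, you would at minimum need to reintroduce minimality in Step 3 --- for instance via second variation or a cut-and-paste comparison --- and not rely on $H^s_E=0$ alone.
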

In particular, by combining the blow-up and blow-down arguments in \cite{CRS}, one may obtain that $s$-fractional minimal surfaces in $\mathR^2$ are fully $C^{1,\alpha}$-regular for any $\alpha \in (0,\,s)$.
\begin{corollary}[\cite{SaVa}] \label{corNonlocalCone2d}
	If $E$ is an $s$-fractional minimal set in $\Omega \subset \mathR^2$, then $\partial E \cap \Omega'$ is a $C^{1,\alpha}$-curve for any $\Omega' \Subset \Omega$.
\end{corollary}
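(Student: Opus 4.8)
The plan is to derive the corollary by combining the two‑dimensional classification of $s$-minimal cones, Theorem \ref{thmNonlocalCone2d}, with the interior regularity theory for $s$-minimal surfaces of Caffarelli, Roquejoffre and Savin \cite{CRS}, whose $\varepsilon$-regularity backbone is recalled, in packaged form, in Theorem \ref{improvementFlatnessFFMMM}. Recall that an $s$-fractional minimal set $E$ in $\Omega$ is by definition a local minimizer of the $s$-perimeter — it minimizes $P_s(\,\cdot\,;\Omega')$ among competitors coinciding with $E$ outside $\Omega'$, for every $\Omega'\Subset\Omega$ — so the full \cite{CRS} machinery applies locally in $\Omega$. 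After passing to the good representative of $E$, one obtains that $\partial E\cap\Omega$ is a $C^{1,\alpha}$ hypersurface, hence a curve since $n=2$, outside a relatively closed singular set $\Sigma\subset\partial E\cap\Omega$, where $\Sigma$ is precisely the set of boundary points at which no blow-up of $E$ is a half-plane. The corollary is therefore equivalent to the assertion $\Sigma=\emptyset$.

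The heart of the matter is the blow-up analysis at a candidate singular point. Fix $x_0\in\partial E\cap\Omega$ and set $E_r\coloneqq(E-x_0)/r$. The uniform density estimates, the monotonicity formula, and the compactness of $s$-minimizers from \cite{CRS} provide a sequence $r_k\downarrow0$ along which $E_{r_k}\to C$ in $L^1_{\mathrm{loc}}(\mathR^2)$, where $C$ is an $s$-minimal cone: minimality is inherited in the limit, dilation invariance of $C$ comes from the constancy of the monotone quantity on the blow-up, and the density estimates upgrade $L^1$ convergence to local Hausdorff convergence of the boundaries, so that $0\in\partial C$ and in particular $C$ is nontrivial. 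Theorem \ref{thmNonlocalCone2d} then forces $C$ to be a half-plane, so for $k$ large the rescaled set $E_{r_k}$ enters the flatness regime of the improvement-of-flatness theorem of \cite{CRS}; consequently $\partial E$ is of class $C^{1,\alpha}$ in a neighbourhood of $x_0$, i.e.\ $x_0\notin\Sigma$. Since $x_0$ was arbitrary, $\Sigma=\emptyset$, and hence $\partial E\cap\Omega$, a fortiori $\partial E\cap\Omega'$ for any $\Omega'\Subset\Omega$, is a $C^{1,\alpha}$ curve; the exponent $\alpha$ can be taken to be any number in $(0,s)$, as follows from the blow-up/blow-down arguments of \cite{CRS} combined with Theorem \ref{thmNonlocalCone2d}.

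I expect the only genuinely delicate point to be the step that turns a half-plane blow-up into $C^{1,\alpha}$-regularity, namely verifying that $L^1$-closeness of $E_{r_k}$ to a half-plane at a small scale actually triggers the $\varepsilon$-regularity statement; this is exactly where the uniform density estimates, and the ensuing local Hausdorff convergence of the boundaries, are needed. The remaining steps are careful invocations of \cite{CRS} in its localized form (for minimizers in a domain, rather than entire almost-minimizers as in Theorem \ref{improvementFlatnessFFMMM}) and of the cone classification Theorem \ref{thmNonlocalCone2d}: there is no conceptual obstacle once one knows that in $\mathR^2$ every tangent cone is a half-plane, because $\Sigma$ is by construction the set of boundary points lacking a half-plane tangent.
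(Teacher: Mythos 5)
Your proposal is correct and follows exactly the route the paper intends: the paper gives no proof of this corollary, merely citing \cite{SaVa} and noting in the surrounding text that it follows from ``combining the blow-up and blow-down arguments in \cite{CRS}'' with the two-dimensional cone classification of Theorem \ref{thmNonlocalCone2d}. Your write-up (blow-up to a tangent cone via monotonicity, density estimates, and compactness; half-plane classification in $\mathR^2$; improvement of flatness to conclude $C^{1,\alpha}$ with $\alpha\in(0,s)$; hence empty singular set) is the standard fleshing-out of that citation.
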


Originally, the regularity of nonlocal(fractional) minimal surfaces, which are defined by the boundaries of sets minimizing the fractional perimeter, was obtained by Caffarelli, Roquejoffre, and Savin \cite{CRS}. Precisely they proved that every fractional minimal surface is locally $C^{1,\alpha}$ except a closed set of Hausdorff dimension $n-2$. Moreover, thanks to Corollary \ref{corNonlocalCone2d}, this closed set of fractional minimal surfaces has Hausdorff dimension at most $n-3$.

As a consequence of these regularity results, we obtain 
\begin{lemma}[$C^{1,\alpha}$-regularity of boundary of super-level set of minimizers]\label{holderRegualrityLemma}
	Let $s \in (0,\,1)$ and let $f \in L^2 \cap L^{\infty}(\mathR^n)$. Assume that $K(x) = |x|^{-(n+s)}$ for $x \in \mathR^n \setminus \{0\}$ and $u \in BV_K \cap L^2(\mathR^n)$ is a minimizer of the functional $\capF_{K,f}$. Then, for each $t \in \mathR$, the boundary of the super-level set of $u$ is of class $C^{1,\alpha}$ with some $0 < \alpha < 1$, except a closed set of Hausdorff dimension $n-3$. 
\end{lemma}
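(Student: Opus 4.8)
The plan is to show that each super-level set $E_t := \{u > t\}$ is an \emph{almost minimizer} of $P_s$ in the sense of \eqref{definitionAlmostMinimizer}, and then invoke the regularity theory recalled above (Theorem \ref{improvementFlatnessFFMMM} together with Corollary \ref{corNonlocalCone2d}) to upgrade this to $C^{1,\alpha}$-regularity of $\partial E_t$ outside a closed set of Hausdorff dimension $n-3$. The starting point is Lemma \ref{relationMiniTwoEnergies}: for every $t \in \mathR$ the set $E_t$ minimizes $\capE_{K,f,t}(E) = P_K(E) + \int_E (t - f(x))\,dx$ among measurable sets. In particular, comparing $E_t$ with an arbitrary competitor $F$ (which by Lemma \ref{boundednessMinimizers} we may take bounded, with $|F| < \infty$), minimality gives
\begin{equation*}
	P_s(E_t) \leq P_s(F) + \int_F (t - f(x))\,dx - \int_{E_t}(t-f(x))\,dx.
\end{equation*}

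Next I would estimate the discrepancy of the bulk terms. Writing $\int_F(t-f) - \int_{E_t}(t-f) = \int_{F \setminus E_t}(t-f) - \int_{E_t \setminus F}(t-f)$, each integrand is bounded in absolute value by $|t| + \|f\|_{L^\infty}$, so that
\begin{equation*}
	\int_F (t - f)\,dx - \int_{E_t}(t-f)\,dx \leq \bigl(|t| + \|f\|_{L^\infty(\mathR^n)}\bigr)\,|E_t \Delta F|.
\end{equation*}
By Proposition \ref{comparisonLinfty} we have $\|u\|_{L^\infty} \leq \|f\|_{L^\infty}$, so it suffices to restrict attention to $t \in [-\|f\|_{L^\infty},\,\|f\|_{L^\infty})$ (for $t$ outside this range $E_t$ is empty or all of $\mathR^n$, hence trivially smooth), and on this range $|t| + \|f\|_{L^\infty} \leq 2\|f\|_{L^\infty}$. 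Setting $\Lambda := 2(1-s)\|f\|_{L^\infty(\mathR^n)}$, the two displays combine to give exactly
\begin{equation*}
	P_s(E_t) \leq P_s(F) + \frac{\Lambda}{1-s}\,|E_t \Delta F|
\end{equation*}
for every bounded $F$, i.e. $E_t$ is an almost minimizer of $P_s$ with the constant $\Lambda$, uniformly in $t$.

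Then I would apply Theorem \ref{improvementFlatnessFFMMM} with this $\Lambda$ (and $s_0 = s$) to conclude that $\partial E_t$ is of class $C^{1,\alpha}$ for some $\alpha \in (0,1)$ outside a closed singular set $\Sigma_t$ of Hausdorff dimension at most $n-2$. To improve the dimension bound from $n-2$ to $n-3$, I would run the standard blow-up argument: at any point of $\Sigma_t$, a blow-up of $E_t$ is a minimizing cone for $P_s$ (the almost-minimality term scales away), and by the dimensional-reduction argument of \cite{CRS} combined with the planar classification in Theorem \ref{thmNonlocalCone2d} / Corollary \ref{corNonlocalCone2d} — which says every $s$-minimal cone in $\mathR^2$ is a half-plane — there are no nontrivial singular minimizing cones in dimension $n \leq 2$; hence the singular set drops at least one further dimension, giving $\dim_{\mathcal H} \Sigma_t \leq n-3$. (In the two-dimensional case of the main theorem this says $\Sigma_t = \emptyset$, i.e. $\partial E_t$ is a fully $C^{1,\alpha}$ curve, which is the fact actually used later.)

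The main obstacle is the first step — verifying the almost-minimality inequality in the precise normalized form of \eqref{definitionAlmostMinimizer}. This requires (i) reducing to bounded competitors, for which the nonlocal isoperimetric inequality and Lemma \ref{boundednessMinimizers} guarantee $E_t$ itself is bounded, and (ii) bounding the linear term by $|E_t \Delta F|$ with the correct constant, which uses the $L^\infty$ bound on $u$ from Proposition \ref{comparisonLinfty} to keep $\Lambda$ finite and independent of $t$. Everything after that is a direct citation of the quoted regularity theorems, so no further genuine work is needed beyond checking that the hypotheses of Theorem \ref{improvementFlatnessFFMMM} and Corollary \ref{corNonlocalCone2d} apply with the constants just produced.
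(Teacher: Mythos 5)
Your proposal is correct and follows essentially the same route as the paper: both use Lemma \ref{relationMiniTwoEnergies} to reduce to almost-minimality of $E_t$ for $P_s$ and then invoke Theorem \ref{improvementFlatnessFFMMM} together with the Savin--Valdinoci dimension reduction. You are in fact slightly more careful on one point, writing $|t|+\|f\|_{L^\infty}$ and normalizing $\Lambda$ to match \eqref{definitionAlmostMinimizer} exactly, where the paper writes $(t+\|f\|_{L^\infty})$ and leaves the normalization implicit.
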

\begin{proof}
	We fix $t \in \mathR$. Let $x_0 \in \partial \{u>t\}$ and $r>0$ be any number. First, from the assumption on $f$ and Lemma \ref{boundednessMinimizers} in Section \ref{secComparisonMini}, $u$ is non-negative and there exists a constant $R_0>0$ such that $E_t \coloneqq \{u>t\} \subset B_{\frac{R_0}{2}}$ for any $t \geq -\|u\|_{L^{\infty}}$. In order to apply Theorem \ref{improvementFlatnessFFMMM} to our case, it is sufficient to show that each set $E_t$ is an almost minimizer in the sense of \eqref{definitionAlmostMinimizer}. From Lemma \ref{relationMiniTwoEnergies}, we know that $\{u>t\}$ is a solution to the problem
	\begin{equation}\nonumber
		\min\{\capE_{K,f,t}(E) \mid |E| < \infty\}
	\end{equation}
	for each $t \in \mathR$. Hence, from the minimality and boundedness of $E_t$, we have that
	\begin{equation}\label{minimalitySublevel}
		\capE_{K,f,t}(E_t) \leq \capE_{K,f,t}(F)
	\end{equation}
	for any bounded measurable set $F \subset \mathR^n$. Hence, from \eqref{minimalitySublevel}, we can compute as follows: for any bounded measurable set $F$, we have
	\begin{align}\label{estimateQuasiNonlocalMini}
		P_{K}(E_t) - P_{K}(F) &= \capE_{K,f,t}(E_t) - \int_{E_t}(t-f(x))\,dx  \nonumber\\
		&\qquad - \capE_{K,f,t}(F) + \int_{F}(t-f(x)\,dx \nonumber\\
		&\leq \int_{\mathR^n}|\chi_{E_t}- \chi_{F}|\,|t - f(x)|\,dx \nonumber\\
		&\leq \int_{B_r(x_0)}|t - f(x)|\,dx. 
	\end{align}
	Since we assume that $f \in L^{\infty}(\mathR^n)$, we have
	\begin{equation}\label{estimateResidue01}
		\int_{B_r(x_0)}|t - f(x)|\,dx \leq (t +\|f\|_{L^{\infty}(\mathR^n)})\,|E_t \Delta F|.
	\end{equation} 
	Hence, from \eqref{estimateQuasiNonlocalMini} and \eqref{estimateResidue01}, we have
	\begin{equation}\nonumber
		P_K(E_t) \leq  P_K(F) + (t + \|f\|_{L^{\infty}(\mathR^n)})\,|E_t \Delta F|.
	\end{equation}
	Therefore, we apply Theorem \ref{improvementFlatnessFFMMM} and Corollary \ref{corNonlocalCone2d} to conclude that the claim is valid. 
\end{proof}


In addition, we employ another result of the regularity of solutions to integro-differential equations via the bootstrap argument. This result is obtained by Barrios, Figalli, and Valdinoci \cite[Theorem 1.6]{BFV}. They proved the following regularity theorem on the solutions to integro-differential equations. For simplicity, we do not describe the whole statement. See \cite[Theorem 1.6]{BFV} for the full statement. 
\begin{theorem}\label{theoremBootstrapBFV}
	Let $v \in L^{\infty}(\mathR^{n-1})$ be a solution (in the viscosity sense) to the integro-differential equation
	\begin{equation}\nonumber
		\int_{\mathR^{n-1}}A_r(x',\,y')\left( v(x'+y') + v(x'-y') - 2v(x') \right) \,dy' = F(x', v(x'))
	\end{equation}
	for any $x' \in B'_r(0) \subset \mathR^{n-1}$ where $A_r$ satisfies the following assumptions:
	\begin{itemize}
		\item[(A1)] There exist constants $a_0,\,r_0>0$ and $\eta \in (0,\frac{a_0}{4})$ such that 
		\begin{equation*}
			\frac{(1-s)(a_0-\eta)}{|y'|^{n+s}} \leq A_r(x',\, y') \leq \frac{(1-s)(a_0+\eta)}{|y'|^{n+s}} 
		\end{equation*} 
		for any $x' \in B'_r(0)$ and $y' \in B'_{r_0}(0) \setminus \{0\}$.
		
		\item[(A2)] There exists a constant $C_0>0$ such that
		\begin{equation*}
			\| A_r(\cdot,\,y') \|_{C^{0,\beta}(B'_1)} \leq \frac{C_0}{|y'|^{n+s}}
		\end{equation*}
		for any $y' \in B'_{r_0}(0) \setminus \{0\}$.
	\end{itemize}
	and $F \in C^{0,\beta}(B'_r(0))$ with $\beta \in (0,\,1]$. Then $v \in C^{1,s+\alpha}(B'_{\frac{r}{2}}(0))$ for any $\alpha < \beta$.
\end{theorem}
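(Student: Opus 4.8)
The plan is to obtain the conclusion by a bootstrap argument built on the interior H\"older and Schauder regularity theory for linear integro-differential operators of order $1+s$ whose kernels are comparable to that of $(-\Delta)^{(1+s)/2}$, following the Caffarelli--Silvestre circle of ideas for nonlocal equations. Throughout, the two structural hypotheses do the essential work: (A1) makes the operator $\capL_r w(x') := \int_{\mathR^{n-1}} A_r(x',y')\,\big(w(x'+y')+w(x'-y')-2w(x')\big)\,dy'$ uniformly elliptic of order $1+s$, with ellipticity constants stable as $s \to 1$ (so that the quoted estimates do not degenerate), while (A2) provides the H\"older control of $x' \mapsto A_r(x',y')$, weighted by $|y'|^{-(n+s)}$, that is exactly what one needs in order to differentiate the equation.

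First I would establish a base H\"older estimate. Since $v \in L^\infty(\mathR^{n-1})$ and the right-hand side $x' \mapsto F(x',v(x'))$ is bounded, the Krylov--Safonov / De Giorgi type interior estimate valid for such operators with merely measurable coefficients satisfying (A1) yields $v \in C^{0,\gamma}_{\mathrm{loc}}(B'_r(0))$ for some universal $\gamma \in (0,1)$, with a quantitative bound on a slightly smaller ball in terms of $\|v\|_{L^\infty(\mathR^{n-1})}$ and $\|F(\cdot,v(\cdot))\|_{L^\infty(B'_r(0))}$, the far tail $\int_{|y'| \ge r_0} A_r(x',y')(\cdots)\,dy'$ (finite because $v$ is bounded) being absorbed into the inhomogeneous term.

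Next I would iterate by incremental quotients. For a small increment $h$, set $v_h(x') := \big(v(x'+h)-v(x')\big)/|h|^{\sigma}$ for the currently available exponent $\sigma$. Writing the equation at $x'+h$ and at $x'$ and subtracting, one sees that $v_h$ solves an equation governed by $\capL_r$ with base point frozen at $x'+h$, up to (i) an error term produced by the kernel increment $A_r(x'+h,y')-A_r(x',y')$ tested against $v(x'+y')+v(x'-y')-2v(x')$, which (A2) controls because the weighted H\"older bound $C_0|y'|^{-(n+s)}$ on $A_r(\cdot,y')$ beats the second difference of the already-H\"older function $v$ near $y'=0$ and is integrable at infinity; and (ii) a right-hand side $\big(F(x'+h,v(x'+h))-F(x',v(x'))\big)/|h|^{\sigma}$, bounded using $F \in C^{0,\beta}$ and the H\"older bound on $v$ just obtained. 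Applying the base estimate to $v_h$, uniformly in $h$, upgrades $v$ to $C^{0,\gamma'}_{\mathrm{loc}}$ with a strictly larger exponent; a finite number of iterations gives $v \in C^{0,\delta}_{\mathrm{loc}}$ for every $\delta<1$. At this stage both the kernel (via (A2)) and the datum $x'\mapsto F(x',v(x'))$ are $C^{0,\beta}_{\mathrm{loc}}$, so the Schauder-type estimate for $\capL_r$ --- which upgrades a $C^{0,\beta}$ right-hand side to a solution smoother by the order $1+s$, modulo the endpoint --- produces $v \in C^{1,s+\alpha}_{\mathrm{loc}}(B'_{r/2}(0))$ for every $\alpha<\beta$ (read in the higher-order H\"older sense, i.e. as a statement on $\nabla v$, when $s+\beta\ge 1$); the passage from $B'_r$ to $B'_{r/2}$ is the cumulative cost of the finitely many interior estimates.

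The hard part will be the incremental-quotient step for a variable-coefficient nonlocal operator in the viscosity framework: one must verify that $v_h$ is genuinely a viscosity sub/supersolution of the perturbed equation, which calls for a stability argument for viscosity solutions under translation and under the subtraction of two nonlocal operators, and one must make sense of the kernel-increment error even though each individual operator with kernel $A_r(x'\pm h,\cdot)$ only yields a principal-value integral. It is precisely here that (A1) is indispensable --- to keep the equation uniformly elliptic of order $1+s$, uniformly as $s\to 1$, so the constants in the cited estimates stay bounded --- and (A2) is indispensable --- to guarantee that the weighted H\"older bound on the kernel defeats the $|y'|^{-(n+s)}$ singularity, so the error term is a genuine lower-order perturbation that can be fed into the right-hand side at every stage of the iteration.
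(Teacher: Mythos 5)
This statement is not proved in the paper: the authors import it (in a simplified form, as they themselves note) from Barrios--Figalli--Valdinoci, \cite[Theorem 1.6]{BFV}, and explicitly refer the reader there for the full statement and proof. There is therefore no in-paper argument to compare your sketch against; I can only assess whether the sketch plausibly reconstructs the cited proof.

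Your overall plan --- base interior estimate of Caffarelli--Silvestre/Krylov--Safonov type, an incremental-quotient iteration driven by (A2) and the $C^{0,\beta}$ regularity of $F$, and a Schauder step at the end --- is indeed the shape of BFV's bootstrap. However, there is a concrete quantitative gap at exactly the step you flag as hard, namely where you claim that ``(A2) controls because the weighted H\"older bound $\ldots$ beats the second difference of the already-H\"older function $v$ near $y'=0$.'' Write the error term out: up to the normalization $|h|^{-\sigma}$, it is
\begin{equation*}
\int_{\mathR^{n-1}}\bigl(A_r(x'+h,y')-A_r(x',y')\bigr)\bigl(v(x'+y')+v(x'-y')-2v(x')\bigr)\,dy'.
\end{equation*}
By (A2) the kernel increment is $\lesssim|h|^{\beta}|y'|^{-(n+s)}$, while for $v\in C^{0,\gamma}$ the second difference is only $\lesssim|y'|^{\gamma}$. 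Integrating over spheres in $\mathR^{n-1}$ the near-field contribution behaves like $\int_0 r^{\gamma-s-2}\,dr$, which diverges unless $\gamma>1+s$; since the base estimate gives some $\gamma\in(0,1)$ and $1+s>1$, the integral diverges at every stage short of the regularity you are trying to prove. (Upgrading the base estimate to $C^{1,\tau}$ --- which is the right thing to do here, as the operator has order $1+s>1$ and the Caffarelli--Silvestre $C^{1,\alpha}$ theory applies --- improves the second difference to $\lesssim|y'|^{1+\tau}$, but then convergence still requires $\tau>s$, which the base theory does not give.) In other words, the direct kernel-increment bound is supercritical and the incremental quotient, as you set it up, cannot close. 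BFV avoid this by a perturbative/compactness argument: they approximate by constant-coefficient translation-invariant operators on a hierarchy of shrinking scales, use the clean regularity theory for those, and feed the kernel oscillation in as a small perturbation at each scale, rather than differencing the full variable-coefficient equation once. So your outline is faithful in spirit, but the step you sketch most loosely is not merely technical --- the naive estimate actually fails there, and a genuinely different mechanism is what makes the bootstrap close. Since the present paper only invokes the theorem as a black box, this does not affect the paper; it does mean your sketch should not be read as a proof.
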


Taking into account all the above arguments, we can obtain that the boundary of the super-level set of the minimizer of $\capF_{K,f}$ has the $C^{2,s+\beta-1}$-regularity under the $\beta$-H\"older regularity of a given datum $f$ with $\beta \in (1-s,\,1]$. Precisely, we prove
\begin{lemma}\label{improvedRegularity}
	Assume that $K(x) = |x|^{-(n+s)}$ for $x \in \mathR^n \setminus \{0\}$ with $s\ in (0,\,1)$ and $f$ is in  $L^2 \cap L^{\infty}(\mathR^n)$. Let $u \in BV_K \cap L^2(\mathR^n)$ be a minimizer of the functional $\capF_{K,f}$. If a datum $f$ is in $C^{0,\beta}_{loc}(\mathR^n)$ with $\beta \in (1-s,\,1]$, then for each $t \in \mathR$, the boundary of the super-level set $\{u > t\}$ is of class $C^{2,s+\alpha-1}$ with $1-s < \alpha < \beta \leq 1$ except a closed set of Hausdorff dimension $n-3$.
\end{lemma}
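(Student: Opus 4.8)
The plan is to use the $C^{1,\alpha}$-regularity provided by Lemma~\ref{holderRegualrityLemma} as a starting point and to run a bootstrap based on the integro-differential regularity result of Barrios, Figalli, and Valdinoci, that is, Theorem~\ref{theoremBootstrapBFV}. Since $f\in L^2\cap L^\infty(\mathR^n)$, the minimizer $u$ is bounded by Proposition~\ref{comparisonLinfty}, so only the values $t\in[-\|u\|_{L^\infty},\,\|u\|_{L^\infty})$ are of interest; for such $t$ the set $E_t:=\{u>t\}$ is bounded by Corollary~\ref{corUniformBoundednessMinimizer} and minimizes $\capE_{K,f,t}$ among sets of finite volume by Lemma~\ref{relationMiniTwoEnergies}, and by Lemma~\ref{holderRegualrityLemma} its boundary is of class $C^{1,\alpha_0}$ for some $\alpha_0\in(0,1)$ away from a closed set $\Sigma$ of Hausdorff dimension at most $n-3$. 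Fix a regular point $x_0\in\partial E_t\setminus\Sigma$; after a rigid motion we may assume $x_0=0$, that $e_n$ is the inner normal, and that in a small cylinder $\partial E_t=\{x_n=v(x')\}$ with $v\in C^{1,\alpha_0}(B'_r(0))$, $v(0)=0$, $\nabla v(0)=0$; shrinking $r$ we make $\|\nabla v\|_{L^\infty(B'_r)}$ as small as we please. Comparing $E_t$ with competitors obtained by pushing $\partial E_t$ slightly up or down near $x_0$ shows that $v$ solves, in the viscosity sense (and classically once $v$ is $C^{1,\alpha}$ with $\alpha>s$), the prescribed nonlocal mean curvature equation $H^s_{E_t}(x',v(x'))=f(x',v(x'))-t$.

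The first real step is to put this equation into the format required by Theorem~\ref{theoremBootstrapBFV}. Integrating the kernel $|z|^{-(n+s)}$ in the vertical direction, for a graph one has
\[
H^s_{E_t}(x',v(x'))=\mathrm{p.v.}\int_{\mathR^{n-1}}\frac{1}{|y'|^{n-1+s}}\,G\!\left(\frac{v(x'+y')-v(x')}{|y'|}\right)dy',
\]
where $G$ is odd, smooth, bounded, and $G'(0)>0$. Symmetrizing in $y'\mapsto -y'$ and using, with $a=\tfrac{v(x'+y')-v(x')}{|y'|}$ and $b=\tfrac{v(x'-y')-v(x')}{|y'|}$, the elementary identity
\[
G(a)+G(b)=G(a)-G(-b)=\Big(\int_0^1 G'\big(-b+\theta(a+b)\big)\,d\theta\Big)(a+b),
\]
one rewrites the equation, after extending $v$ to a bounded function on $\mathR^{n-1}$, absorbing the contribution of $|y'|\ge r_0$ (which is a $C^{0,\beta}$ function of $x'$ since the kernel is smooth there and $E_t$ is fixed) into the right-hand side, and multiplying through by $(1-s)$, in the form
\[
\int_{\mathR^{n-1}}A_r(x',y')\big(v(x'+y')+v(x'-y')-2v(x')\big)\,dy'=F(x',v(x')),
\]
with $A_r(x',y')=\frac{1-s}{2|y'|^{n+s}}\int_0^1 G'\big(-b+\theta(a+b)\big)\,d\theta$ and $F=c_{n,s}(1-s)(f(\cdot,v(\cdot))-t)+(\text{smooth term})$.

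Now (A1) holds with $a_0=\tfrac12 G'(0)$ and $\eta$ small, because $G'$ is smooth with $G'(0)>0$ and the arguments of $G'$ in $A_r$ are bounded by $\|\nabla v\|_{L^\infty(B'_r)}$. Moreover $x'\mapsto\frac{v(x'\pm y')-v(x')}{|y'|}=\int_0^1\nabla v(x'\pm\theta y')\cdot\frac{\pm y'}{|y'|}\,d\theta$ inherits the current Hölder regularity of $\nabla v$ uniformly in $y'$, so if $\nabla v\in C^{0,\gamma}$ then $\|A_r(\cdot,y')\|_{C^{0,\gamma}(B'_1)}\le C|y'|^{-(n+s)}$, which is (A2) with exponent $\gamma$; and $F\in C^{0,\beta}_{loc}$ because $f\in C^{0,\beta}_{loc}$ and $x'\mapsto(x',v(x'))$ is Lipschitz. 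Applying Theorem~\ref{theoremBootstrapBFV} with Hölder exponent $\min\{\gamma,\beta\}$ gives $v\in C^{1,\,s+\alpha'}(B'_{r/2})$ for every $\alpha'<\min\{\gamma,\beta\}$. Starting from $\gamma=\alpha_0$ and iterating, each application raises the Hölder exponent of $\nabla v$ by almost $s$, so after finitely many steps one has $\nabla v\in C^{0,\gamma}$ with $\gamma\ge\beta$; a final application, now with exponent $\beta$, yields $v\in C^{1,s+\alpha}(B'_{r/2})$ for every $\alpha<\beta$, and since $\beta>1-s$ this means precisely $v\in C^{2,\,s+\alpha-1}$ for every $1-s<\alpha<\beta$ (the value $s+\alpha$ being larger than $1$, so that the conclusion of Theorem~\ref{theoremBootstrapBFV} is a $C^2$-statement, as in \cite{BFV}). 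Since $x_0\in\partial E_t\setminus\Sigma$ was arbitrary, the claim follows.

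The heart of the matter is the passage just described: rewriting the prescribed nonlocal mean curvature equation for the graph in exactly the format of Theorem~\ref{theoremBootstrapBFV} and verifying (A1)--(A2). One must localize the nonlocal operator carefully so that its far-field part contributes only a sufficiently regular term to $F$, exploit the smallness of $\|\nabla v\|_{L^\infty}$ near a regular point to obtain the uniform ellipticity (A1), and, along the iteration, keep track of the passage from the viscosity formulation (valid when $v$ is merely $C^{1,\alpha_0}$ with $\alpha_0\le s$) to the classical one, as well as of the transition of the conclusion from a $C^{1,\cdot}$-statement to a $C^{2,\cdot}$-statement once $s+\alpha$ crosses the value $1$.
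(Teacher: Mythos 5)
Your proposal follows the same route as the paper: obtain $C^{1,\alpha}$-regularity from Lemma~\ref{holderRegualrityLemma}, write $\partial\{u>t\}$ locally as a graph $v$ solving a prescribed nonlocal mean curvature equation, and bootstrap via Theorem~\ref{theoremBootstrapBFV} applied iteratively until $\nabla v\in C^{0,s+\alpha}$ with $s+\alpha>1$. The only difference is that you spell out explicitly the symmetrization that puts the graph equation into the form with kernel $A_r$ satisfying (A1)--(A2) and the step-by-step iteration, whereas the paper delegates these computations to \cite{BFV}.
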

\begin{proof}
	From Lemma \ref{holderRegualrityLemma} and the assumption that $f \in C^{0,\beta}_{loc} \cap L^{\infty}(\mathR^n)$ with $\beta \in (1-s,\,1]$, the boundary of the set $\{u > t\}$ has full $C^{1,\alpha}$-regularity with some $\alpha \in (0,\,1)$ except a closed set $\Sigma$ of Hausdorff dimension $n-3$, and thus we can represent $\partial \{u>t\} \setminus \Sigma$ locally as a graph of a $C^{1,\alpha}$-function $v_t$ in a bounded domain $U' \subset \mathR^{n-1}$. By employing the computation shown in \cite{BFV}, we may have that $v_t$ satisfies the equation, in the viscosity sense,
	\begin{align}
		&\int_{\mathR^{n-1}} A_r(x',\,y') \frac{v_t(x'+y') + v_t(x'-y') - 2v(x')}{|y'-x'|^{(n-1)+(1+s)}}\,dy' \nonumber\\
		&\quad  = G(x',\,v(x')) + t - f(x',\,v_t(x')) \quad \text{for $x' \in U' \subset \mathR^{n-1}$} \nonumber
	\end{align} 
	where $A_r$ satisfies (A1) and (A2) and $G$ is a smooth function (see \cite{BFV} for the detail). Then, since $f \in C^{0,\beta}_{loc}(\mathR^n)$, we now apply Theorem \ref{theoremBootstrapBFV} several times, if necessary, to conclude that the regularity of $v_t$ can be improved up to $C^{2,s+\alpha-1}$ with $1-s < \alpha < \beta \leq 1$. From the compactness of the boundary of $\{u>t\}$ and by the standard covering argument, we obtain the $C^{2,s+\alpha-1}$-regularity of $\partial \{u>t\}$ for any $\alpha \in (1-s,\,\beta)$. 
\end{proof}

\subsection{Proof of the main regularity result}
By using Lemma \ref{improvedRegularity}, we are now ready to prove the main result of this paper.

Let us briefly explain the strategy of the proof of Theorem \ref{mainTheorem}. Let $t_1, \, t_2 \in [-\|u\|_{L^{\infty}},\,\|u\|_{L^{\infty}})$ with $t_1 < t_2$ and we set $E_1 \coloneqq \{u>t_1\}$ and $E_2 \coloneqq \{u>t_2\}$. Notice that $E_2 \subset E_1$ because $t_1 < t_2$. In order to show the H\"older regularity of $u$, it is sufficient to observe that the boundaries of $E_1$ and $E_2$ are not too close. Precisely, using the regularity of $f \in C^{0,\beta}$, we show the inequality that
\begin{equation}\label{inequalityCrucialMainTheorem}
	t_2 - t_1 \lesssim \left( \dist(\partial E_1, \partial E_2) \right)^{\beta}.
\end{equation}
To see this, we compare the nonlocal mean curvatures on the boundaries $\partial E_1$ and $\partial E_2$. Notice that one can compare the curvatures at points which the boundaries have in common. Thus, we slide $\partial E_1$ (denoted by $\partial E^{\nu}_1$) along the outer unit normal $\nu$ of $\partial E_1$ until $\partial E^{\nu}_1$ touches $\partial E_2$. At the touching point, we can now compare the curvatures between $\partial E^{\nu}_1$ and $\partial E_2$. Moreover, by employing the computation by D\'avila, del Pino, and Wei \cite{DdPW}, we can also compare the curvatures between $\partial E_1$ and $\partial E^{\nu}_1$.  


\begin{proof}[Proof of Theorem \ref{mainTheorem}]
	Let $d_{t} \coloneqq d_{E_t}$ for $t \in [0,\,\infty)$ be a signed distance function from $\partial \{u>t\}$, which is negative inside $\{u>t\}$. We set $E_t \coloneqq \{x\mid u(x)>t\}$ for any $t$. Since $n=2$, from Lemma \ref{improvedRegularity} it follows that all the points on $\partial E_t$ are regular points. Thus, the signed distance function $d_t$ is of class $C^{2,s+\alpha-1}$ in a neighborhood of $\partial E_t$ with $1-s<\alpha<\beta$ (see, for instance, \cite{Wl, DeZo01, DeZo02, Bellettini} for the relation between the distance function and regularity of surfaces).
	
	Recall that, from the assumption on $f$ and Proposition \ref{comparisonLinfty}, we have that $\|u\|_{L^{\infty}} \leq \|f\|_{L^{\infty}} < \infty$. We now take any $t_1 \in [-\|u\|_{L^{\infty}},\,\|u\|_{L^{\infty}})$ and set $E_1 \coloneqq E_{t_1}$. Then we can choose a neighborhood $U_1 \subset \mathR^2$ of the boundary $\partial E_1$ such that $d_1 \coloneqq d_{t_1} \in C^{2,s+\alpha-1}(U_1)$. Moreover, we take any $t_2 \in (-\|u\|_{L^{\infty}},\,\|u\|_{L^{\infty}})$ with $t_2 > t_1$ and set $E_2 \coloneqq E_{t_2}$. Then, from Lemma \ref{corUniformBoundednessMinimizer}, we obtain that there exists a constant $R_c>0$ independent of $t_1$ and $t_2$ such that $E_2 \subset E_1 \subset B_{R_c}$. We can choose points $x_1 \in \partial E_1$ and $x_2 \in \partial E_2$ such that
	\begin{equation}\nonumber
		\tilde{\delta} \coloneqq \dist(\partial E_1,\,\partial E_2) = |x_1 - x_2|.
	\end{equation}
	Since we study the local H\"older regularity of $u$, it is sufficient to consider the case that $x_2 \in U_1$. 
	
	We first show that the following inequality holds: 
	\begin{equation}\label{inequalityHolderRegularity}
		t_2 - t_1 \leq ([f]_{\beta} + C\,\tilde{\delta}^{1-\beta} )\,\tilde{\delta}^{\beta}
	\end{equation}
	where $\beta$ is as in Theorem \ref{mainTheorem} and $C>0$ is a constant depending only on $s$ and $d_1$. 
	
	Without loss of generality, we may assume that $\widetilde{\delta} > 0$. Indeed, if $\widetilde{\delta} = 0$, then, from the definition of $\widetilde{\delta}$, we can easily see that $t_2=t_1$. This implies that the inequality \eqref{inequalityHolderRegularity} is valid. Thus, in the sequel, we always assume that $\tilde{\delta} > 0$.

	
	Now we define $E_1^{\delta}$ as 
	\begin{equation}\nonumber
		E_1^{\delta} \coloneqq \{ x \in E_1 \mid \dist(x,\,\partial E_1) \leq \delta\}
	\end{equation}
	for any $\delta \in (0,\,\tilde{\delta}]$. Then, from the choice of $t_2$ and the definition of $\widetilde{\delta}$, the boundary of $E_1^{\delta}$ can be described as $\partial E_1^{\delta} = \{x-\delta \nabla d_1(x) \mid  x \in \partial E_1 \}$ for any $\delta \in (0,\,\widetilde{\delta}]$, where $\nabla d_1$ coincides with the outer unit normal vector of $\partial E_1$. From the definition of the nonlocal mean curvature, we can easily show that the following comparison inequality holds:
	\begin{equation}\label{comparisonE_1dAndE_2}
		H^K_{E_1^{\tilde{\delta}}}(x_2) \leq H^K_{E_2}(x_2).
	\end{equation} 
	From the choice of $x_1$ and $x_2$, we have $x_2 = x_1 - \delta\,\nabla d_1(x_1)$. Now we compare the two nonlocal curvatures $H^{K}_{E_1^\delta}(x_2)$ and $H^{K}_{E_1}(x_1)$. To do this, we employ the computation shown by D\'avila, del Pino, and Wei in \cite{DdPW} (see also \cite{Cozzi, JuLa}). This computation is on the variation of the nonlocal(fractional) mean curvature. Precisely, we have that, for any set $E \subset \mathR^2$ with a smooth boundary (at least $C^2$), it holds that
	\begin{align}\label{variationNonlocalMC}
		&-\left.\frac{d}{d\delta}\right|_{\delta=0} H^K_{E_{\delta h}}(x-\delta
		h(x)\nabla d_{E}(x)) \nonumber\\
		&= 2\int_{\partial E}\frac{h(y) - h(x)}{|y-x|^{2+s}}\,d\capH^{n-1}(y) \nonumber \\
		&\quad  + 2\int_{\partial E}\frac{(\nabla d_E(y) - \nabla d_E(x)) \cdot \nabla d_E(x)}{|y-x|^{2+s}} \,d\capH^{n-1}(y) 
	\end{align}
	for $x \in \partial E$ where $h \in L^{\infty}(\partial E)$ and $h$ is as smooth as $\partial E$. Here we define $E_{\delta h}$ in such a way that its boundary is given by $\partial E_{\delta h} \coloneqq \{x-\delta\,h(x)\,\nabla d_E(x) \mid x\in \partial E\}$ for any $\delta>0$. Then from \eqref{variationNonlocalMC} and by some computation, we have the estimate of the variation of the nonlocal mean curvature $H^s_{E_1^{\delta}}$ for small $\delta>0$. Precisely we can obtain that there exist constants $C>0$ and $\delta_0>0$, which depends on the space-dimension $n=2$, $s$, and the $L^{\infty}$-norm of $\nabla^2 d_1$ (equivalently the second fundamental form of $\partial E_1$), such that
	\begin{equation}\label{estimateVariationNonlocalMC}
		-\frac{d}{d\delta} H^K_{E_1^{\delta}}(\Psi_{\delta}(x_1)) \leq C\,\int_{\partial E_1}\frac{|\nabla d_1(y) - \nabla d_1(x_1)|^2}{|y-x_1|^{2+s}} \,d\capH^{n-1}(y)
	\end{equation}
	for any $\delta \in (0,\,\delta_0)$ where we set $\Psi_{\delta}(x_1) \coloneqq x-\delta \,\nabla d_1(x)$. Indeed, choosing any smooth cut-off function $\eta_{\varepsilon}$ such that $\spt\eta_{\varepsilon} \subset 
	B^c_{\varepsilon}(0)$, $\eta_{\varepsilon} \equiv 1$ in $B^c_{2\varepsilon}(0)$, and $0 \leq \eta_{\varepsilon} \leq 1$, we can write the nonlocal curvature as follows:
	\begin{align}\label{nonlocalPeriSplit}
		&-H^s_{E_1^{\delta}}(\Psi_{\delta}(x_1)) \nonumber\\
		&= \int_{\mathR^2}\frac{\chi_{E_1^{\delta}}(y) - \chi_{(E_1^{\delta})^c}(y)}{|y-\Psi_{\delta}(x_1)|^{2+s}} \eta_{\varepsilon}(y-\Psi_{\delta}(x_1))\,dy \nonumber\\
		&\qquad + \int_{\mathR^2}\frac{\chi_{E_1^{\delta}}(y) - \chi_{(E_1^{\delta})^c}(y)}{|y-\Psi_{\delta}(x_1)|^{2+s}} (1-\eta_{\varepsilon}(y-\Psi_{\delta}(x_1)))\,dy \nonumber\\
		&\eqqcolon A_{\varepsilon}(\delta) + B_{\varepsilon}(\delta). 
	\end{align}
	Then we can compute the derivative of $A_{\varepsilon}(\delta)$ in \eqref{nonlocalPeriSplit} for small $\delta>0$ in the following manner: setting $\widetilde{y}_{\delta} \coloneqq y-\Psi_{\delta}(x_1)$ for simplicity, we have
	\begin{align}\label{variationNonlocalMC01}
		& \frac{d}{d\delta}\int_{\mathR^2}\frac{\chi_{E_1^{\delta}}(y) - \chi_{(E_1^{\delta})^c}(y)}{|\widetilde{y}_{\delta}|^{2+s}} \eta_{\varepsilon}(\widetilde{y}_{\delta})\,dy \nonumber\\
		&= \int_{\partial E_1^{\delta}} \frac{\eta_{\varepsilon}(\widetilde{y}_{\delta})}{|\widetilde{y}_{\delta}|^{2+s}}  \,d\capH^{n-1}(y) + \int_{\partial (E_1^{\delta})^c} \frac{\eta_{\varepsilon}(\widetilde{y}_{\delta})}{|\widetilde{y}_{\delta}|^{2+s}}  \,d\capH^{n-1}(y) \nonumber\\
		&\qquad - (2+s)\int_{\mathR^2} \frac{\chi_{E_1^{\delta}}(y) - \chi_{(E_1^{\delta})^c}(y)}{|\widetilde{y}_{\delta}|^{4+s}} (y-x_1+ \delta \nabla d_1(x_1)) \cdot \nabla d_1(x_1) \,\eta_{\varepsilon}(\widetilde{y}_{\delta})\,dy \nonumber\\
		&\qquad \quad + \int_{\mathR^2} \frac{\chi_{E_1^{\delta}}(y) - \chi_{(E_1^{\delta})^c}(y)}{|\widetilde{y}_{\delta}|^{2+s}} \nabla \eta_{\varepsilon}(\widetilde{y}_{\delta}) \cdot \nabla d_1(x_1)\,dy 
	\end{align}
	for any $\delta \in (0,\,1)$ with $\Psi_{\delta}(x_1) \in U_1$. Then by using the Gauss-Green theorem, we have
	\begin{align}\label{variationNonlocalMC02}
		&- (2+s)\int_{\mathR^2} \frac{\chi_{E_1^{\delta}}(y) - \chi_{(E_1^{\delta})^c}(y)}{|\widetilde{y}_{\delta}|^{4+s}} (y-x_1+ \delta \nabla d_1(x_1)) \cdot \nabla d_1(x_1) \,\eta_{\varepsilon}(\widetilde{y}_{\delta})\,dy \nonumber\\
		&=  \int_{\mathR^2} (\chi_{E_1^{\delta}}(y) - \chi_{(E_1^{\delta})^c}(y)) \nabla_y \left(\frac{1}{|\widetilde{y}_{\delta}|^{2+s}}\right) \cdot \nabla d_1(x_1) \,\eta_{\varepsilon}(\widetilde{y}_{\delta})\,dy \nonumber\\
		&= \int_{\partial E_1^{\delta}} \frac{\nabla d_1(x_1) \cdot \nabla d_{E_1^{\delta}}(y)}{|\widetilde{y}_{\delta}|^{2+s}} \eta_{\varepsilon}(\widetilde{y}_{\delta}) \,d\capH^{n-1} \nonumber\\
		&\qquad  - \int_{\partial (E_1^{\delta})^c} \frac{\nabla d_1(x_1) \cdot (-\nabla d_{E_1^{\delta}}(y))}{|\widetilde{y}_{\delta}|^{2+s}} \eta_{\varepsilon}(\widetilde{y}_{\delta}) \,d\capH^{n-1} \nonumber\\
		&\qquad \quad - \int_{\mathR^2} \frac{\chi_{E_1^{\delta}}(y) - \chi_{(E_1^{\delta})^c}(y)}{|\widetilde{y}_{\delta}|^{2+s}} \nabla \eta_{\varepsilon}(\widetilde{y}_{\delta}) \cdot \nabla d_1(x_1)\,dy. 
	\end{align}
	Thus from \eqref{variationNonlocalMC01} and \eqref{variationNonlocalMC02}, we obtain
	\begin{align}
		\frac{d}{d\delta}A_{\varepsilon}(\delta)
		&= \int_{\partial E_1^{\delta}} \frac{2- 2(\nabla d_1(x_1) \cdot \nabla d_{E_1^{\delta}}(y))}{|\widetilde{y}_{\delta}|^{2+s}}\eta_{\varepsilon}(\widetilde{y}_{\delta})  \,d\capH^{n-1}(y) \nonumber\\
		&= \int_{\partial E_1^{\delta}} \frac{|\nabla d_1(x_1) - \nabla d_{E_1^{\delta}}(y)|^2}{|\widetilde{y}_{\delta}|^{2+s}}\eta_{\varepsilon}(\widetilde{y}_{\delta})  \,d\capH^{n-1}(y) \nonumber
	\end{align}
	for any small $\delta >0$ with $\Psi_{\delta}(x_1) \in U_1$. Hence from the change of variables, we obtain 
	\begin{align}
		\frac{d}{d\delta}A_{\varepsilon}(\delta)
		&= \int_{\partial E_1} \frac{|\nabla d_1(x_1) - \nabla d_{1}(y)|^2}{|\Psi_{\delta}(y)-\Psi_{\delta}(x_1)|^{2+s}}\eta_{\varepsilon}(\Psi_{\delta}(y)-\Psi_{\delta}(x_1)) \,J_{\partial E_1}\Psi_{\delta}(y)\,d\capH^{n-1}(y) \nonumber
	\end{align}
	where $J_{\partial E_1}\Psi_{\delta}(y)$ is the tangential Jacobian of $\partial E_1$ at $y$. As is shown in \cite{DdPW}, we can have that there exist constants $c'>0$ and $\delta'>0$, depending on the space-dimension $n=2$ and $s$ but independent of $\varepsilon >0$, such that $|\frac{d}{d\delta}B_{\varepsilon}(\delta)| \leq c'\varepsilon^{1-s}$ for any $\delta \in (0,\,\delta')$ and $\varepsilon \in (0,\,1)$. Therefore, we conclude that
	\begin{align}
		-\frac{d}{d\delta}H^s_{E_1^{\delta}}(\Psi_{\delta}(x_1)) &= \lim_{\varepsilon \downarrow 0}\left(\frac{d}{d\delta}A_{\varepsilon}(\delta) + \frac{d}{d\delta} B_{\varepsilon}(\delta) \right) \nonumber\\
		&= \int_{\partial E_1} \frac{|\nabla d_1(x_1) - \nabla d_{1}(y)|^2}{|\Psi_{\delta}(y)-\Psi_{\delta}(x_1)|^{2+s}} J_{\partial E_1}\Psi_{\delta}(y) \,d\capH^{n-1}(y) \nonumber
	\end{align}
	for any $\delta \in (0,\,\delta'_0)$ where $\delta'_0>0$ is a constant depending on the space-dimension $n=2$, $s$, and the $L^{\infty}$-norm of $\nabla^2 d_1$. From the definition of $\Psi_{\delta}$, we have that there exists a constant $C_0>0$ depending on the space-dimension $n=2$, $s$, and the $L^{\infty}$-norm of $\nabla^2 d_1$, such that 
	\begin{equation}\nonumber
		\frac{J_{\partial E_1}\Psi_{\delta}(y)}{|\Psi_{\delta}(y) -\Psi_{\delta}(x_1)|^{2+s}} \leq  \frac{C_0}{|y-x_1|^{2+s}}
	\end{equation} 
	for any $y \in \partial E_1$ and $\delta \in (0,\,\delta'_0)$. Therefore we obtain that there exist constants $C>0$ and $\delta_0>0$, depending on the space-dimension $n=2$, $s$, and the second derivative of $d_1$ but independent of $\delta$, such that the inequality \eqref{estimateVariationNonlocalMC} with the constant $C$ holds for any $\delta \in (0,\,\delta_0)$. Thus, from the fundamental theorem of calculus and \eqref{estimateVariationNonlocalMC}, we obtain that
	\begin{align}\label{estimateNonlocalCurvatures}
		&-H^K_{E_1^{\delta}}(x-\delta
		\,\nabla d_1(x)) \nonumber\\
		&= -H^K_{E_1}(x_1) - \delta\,\int_{0}^{1} \frac{d}{d\delta} H^K_{E_1^{\delta}}(x- \lambda\delta
		\,\nabla d_1(x)) \,d\lambda \nonumber\\
		&\leq -H^K_{E_1}(x_1) + C\,\delta\,\int_{\partial E_1}\frac{|\nabla d_1(y) - \nabla d_1(x_1)|^2}{|y-x_1|^{2+s}} \,d\capH^{n-1}(y) 
	\end{align}
	for any $\delta \in (0,\,\delta_0)$. Now we show that the integral 
	\begin{equation}\nonumber
		\int_{\partial E_1}\frac{|\nabla d_1(y) - \nabla d_1(x_1)|^2}{|y-x_1|^{2+s}} \,d\capH^{n-1}(y)
	\end{equation}
	is uniformly bounded for any $x_1 \in V$ and any open set $V \subsetneq U_1$. Indeed, we define the set $U^r_1 \coloneqq \{x \in U_1 \mid \dist(x,\,\partial U_1)>r\}$ for any $r>0$ satisfying that $B_{2r}(x) \subset U_1$ for any $x \in U_1$. Then we can compute the integral as follows: for any $x_1 \in U^r_1$, it holds that 
	\begin{align}\label{estimateFracNormNormalVec}
		&\int_{\partial E_1}\frac{|\nabla d_1(y) - \nabla d_1(x_1)|^2}{|y-x_1|^{2+s}} \,d\capH^{n-1}(y) \nonumber\\
		&= \int_{\partial E_1 \cap B_r(x_1)}\frac{|\nabla d_1(y) - \nabla d_1(x_1)|^2}{|y-x_1|^{2+s}} \,d\capH^{n-1}(y) \nonumber\\
		&\qquad + \int_{\partial E_1 \cap B^c_r(x_1)}\frac{|\nabla d_1(y) - \nabla d_1(x_1)|^2}{|y-x_1|^{2+s}} \,d\capH^{n-1}(y) \nonumber\\
		&\leq \int_{\partial E_1 \cap B_r(x_1)}\frac{|\nabla d_1(y) - \nabla d_1(x_1)|^2}{|y-x_1|^2} \frac{1}{|y-x_1|^{n-2+s}} \,d\capH^{n-1}(y) \nonumber\\
		&\qquad + \int_{\partial E_1 \cap B^c_r(x_1)}\frac{4}{|y-x_1|^{2+s}} \,d\capH^{n-1}(y). 
	\end{align}
	From the fundamental theorem of calculus and the fact that $B_r(x_1) \subset U_1$ for any $x_1 \in U^r_1$, we have that
	\begin{equation}\label{estiGradientDistance}
		\frac{|\nabla d_1(y) - \nabla d_1(x_1)|^2}{|y-x_1|^2} \leq \|\nabla^2 d_1\|_{L^{\infty}(B_r(x_1))}^2
	\end{equation}
	for any $y \in B_r(x_1)$. Thus from \eqref{estimateFracNormNormalVec} and \eqref{estiGradientDistance} and noticing that $x_1 \in U^r_1$ and $E_t \subset B_{R_c}$ holds uniformly in $t \geq c$ where $c \coloneqq - \|u\|_{L^{\infty}} > -\infty$, we obtain
	\begin{align}\label{estimateGradDistanceonSurface}
		\int_{\partial E_1}\frac{|\nabla d_1(y) - \nabla d_1(x_1)|^2}{|y-x_1|^{2+s}} \,d\capH^{n-1}(y) &\leq  c_1\, \|\nabla^2 d_1\|_{L^{\infty}(B_r(x_1))}^3\,r^{1-s} \nonumber\\
		&\qquad + \frac{c_2\,\|\nabla^2 d_1\|_{L^{\infty}(U_1)}}{r^s} 
	\end{align}
	where $c_1>0$ and $c_2>0$ are constants depending on the space-dimension $n=2$ and $s$. Since we choose any $r$ in such a way that $B_r(x_1) \subset U_1$, we conclude the claim is valid. Thus, from \eqref{estimateNonlocalCurvatures} and \eqref{estimateGradDistanceonSurface}, we finally obtain the inequality
	\begin{equation}\label{comparisonNonlocalMCSmallDiff}
		-H^K_{E_1^{\delta}}(x_1-\delta
		\,\nabla d_1(x)) \leq -H^K_{E_1}(x_1) + C(n,s,R_c)\,\delta
	\end{equation}
	for any $\delta \in (0,\,\delta_0)$ where $C(n,s,R_c)>0$ ($n=2$ is the space-dimension) and $\delta_0>0$ are some constants, which also depend on the $L^{\infty}$-norm of $\nabla^2 d_1$. Note that the constant $\delta_0$ can be bounded by the inverse of the $L^{\infty}$-norm of $\nabla^2 d_1$. Thus from \eqref{comparisonE_1dAndE_2} and \eqref{comparisonNonlocalMCSmallDiff}, we have that, for any $\delta \in (0,\,\delta_0)$, 
	\begin{equation}\label{comparisonNonlocalCurvature}
		-H^K_{E_2}(x_2) \leq -H^K_{E_1}(x_1) + C(n,s,R_c)\,\delta.
	\end{equation}
	Now we consider the following two cases:
	
	\textit{Case 1}: $0< \tilde{\delta} < \delta_0$. In this case, we simply substitute $\delta = \tilde{\delta}$ with \eqref{comparisonNonlocalCurvature} and obtain
	\begin{equation}\nonumber
		-H^K_{E_2}(x_2) \leq -H^K_{E_1}(x_1) + C(n,s,R_c)\,\tilde{\delta}
	\end{equation}
	where $\tilde{\delta} = \dist(\partial E_1,\,\partial E_2)$.
	
	\textit{Case 2}: $\tilde{\delta} \geq \delta_0$. In this case, there exists a number $N \in \mathN$ such that $ \frac{\tilde{\delta}}{N} < \|\nabla^2 d_1\|_{L^{\infty}(U_1)}^{-1}$. Then setting $\tilde{\delta}_k \coloneqq \frac{k}{N}\tilde{\delta}$ for each $k \in \{1,\cdots,\,N\}$ and taking into account all the above arguments, we obtain the inequality that
	\begin{equation}\label{comparisonNonlocalCurvatureIteration}
		-H^{K}_{E_1^{\tilde{\delta}_k}}(x^{\tilde{\delta}_k}_1) \leq -H^{K}_{E_1^{\tilde{\delta}_{k-1}}}(x^{\tilde{\delta}_{k-1}}_1) + C(n,s,R_c)\,\frac{\tilde{\delta}}{N}
	\end{equation}
	for each $k \in \{1,\cdots,\,N\}$ where we understand the notation $x^{\tilde{\delta}_0}_1 = x_1$ and $E_1^{\tilde{\delta}_0} = E_1$. Thus by summing the inequality \eqref{comparisonNonlocalCurvatureIteration} for all $i \in \{1,\cdots,\,N\}$, we obtain
	\begin{align}
		-H^{K}_{E_1^{\tilde{\delta}}}(x_2) &=  -H^{K}_{E_1^{\tilde{\delta}_N}}(x^{\tilde{\delta}_N}_1) \nonumber\\
		&\leq  -H^{K}_{E_1^{\tilde{\delta}_0}}(x^{\tilde{\delta}_0}_1) + N\,C(n,s,R_c)\,\frac{\tilde{\delta}}{N} = -H^{K}_{E_1}(x_1) + C(n,s,R_c)\,\tilde{\delta} \nonumber
	\end{align}
	where $\tilde{\delta} = \dist(\partial E_1,\,\partial E_2)$. In both cases, we finally obtain the inequality
	\begin{equation}\label{comparisonFractionalMeanCurvatures}
		- H^K_{E_2}(x_2) \leq -H^{K}_{E_1}(x_1) + C(n,s,R_c)\,\tilde{\delta}.
	\end{equation}
	Thanks to Lemma \ref{improvedRegularity}, the Euler-Lagrange equation
	\begin{equation}
		H^s_{E_t}(x) + t - f(x) = 0
	\end{equation}
	holds for every $x \in \partial E_t$. Then, since $E_i$ is the minimizer of $\capE_{K,f,t_i}$ for $i \in \{1,2\}$ and from \eqref{comparisonFractionalMeanCurvatures}, we obtain
	\begin{equation}\nonumber
		t_2 - t_1 \leq f(x_2) - f(x_1) + C(n,s,R_c)\,\tilde{\delta}.
	\end{equation}
	Recalling the definition of $x_2$, the H\"older continuity of $f$, and the fact that $E_t \subset B_{R_c}$ for any $t \geq c$, we conclude that
	\begin{equation}\label{keyInequalityHolder}
		t_2 - t_1 \leq ([f]_{\beta}(B_{R_c}) + C(n,s,R_c)\,\tilde{\delta}^{1-\beta})\,\tilde{\delta}^\beta
	\end{equation}
	where $[f]_{\beta}(B_{R_c})$ is the H\"older constant of $f$ in $B_{R_c}$ given as
	\begin{equation}\nonumber
		[f]_{\beta}(B_{R_c}) \coloneqq \sup_{x,\,y \in B_{R_c}, \, x \neq y}\frac{|f(x) - f(y)|}{|x-y|^{\beta}}
	\end{equation}
	and the constant $\tilde{\delta}$ is defined as $\tilde{\delta} \coloneqq \dist(\partial E_1,\,\partial E_2)$. Note that the constant $C(n,s,R_c)>0$ also depends on the $L^{\infty}$-norm of $\nabla^2 d_1$.
	
	We are now ready to prove the local H\"older continuity of $u$. Let $B_{r_0}(x_0) \subset \mathR^2$ be any open ball of radius $r_0$ with $x_0 \in \{u = t_0 \}$ for a number $t_0 \geq c \coloneqq -\|u\|_{L^{\infty}}$. We take any points $x,\,y \in B_{r_0}(x_0)$ with $x \neq y$ and set $t_1,\,t_2 \in \mathR$ as $t_1 \coloneqq u(x)$ and $t_2 \coloneqq u(y)$. We may assume that $t_1 > t_2 \geq c$ because we only repeat the same argument in the case of $t_1 < t_2$. In addition to this, we also assume that $t_1 > t_0 > t_2$. Indeed, in the case of $t_1 > t_2 \geq t_0$ or $t_0 \geq t_1 > t_2$, it is sufficient to take another point $x_0' \in B_{r_0}(x_0)$ and $t_0' \in \mathR$ such that $x_0' \in \{ u = t_0' \}$ and $t_1 > t_0' > t_2$, and do the argument that we will show below. Moreover, since we only observe the local regularity of $u$, it is sufficient to consider the case that $B_{r_0}(x_0) \subset U_0$ where $U_0$ is a neighborhood of $\partial \{u > t_0\}$ such that the signed distance function from $\partial \{u > t_0\}$ is of class $C^{2, s+\alpha-1}(U_0)$ with $\alpha \in (1-s,\, 1)$. Indeed, if $x \in B_{r_0}(x_0) \setminus U_0$ and $y \in B_{r_0}(x_0)$, then, from the continuity of $u$, we can choose a point $z_0$ in $B_{r_0}(x_0)$ and close to $x$ such that the estimate $|u(x) - u(z_0)| \leq |x-y|^{\beta}$ holds and $t_1 = u(x) > u(z_0) \geq u(y) = t_2$. In the case of $z_0 \in U_0$, we just apply the argument that we will show below with \eqref{keyInequalityHolder} for $z_0$, $x_0$, and $y$; otherwise we can repeat the above argument until we have the point belonging to $U_0$.
	
	Now we choose sufficiently small $\varepsilon > 0$ such that $t_1 - \varepsilon > t_0$ and $t_0 - \varepsilon > t_2$ and then we have that $x \in \{ u > t_1 -\varepsilon\}$, $y\in \{ u > t_2 - \varepsilon\}$, and $x_0 \in \{ u > t_0 - \varepsilon\}$. Hence, from \eqref{keyInequalityHolder} and the fact that $x,\,y \in B_{r_0}(x_0)$, we obtain the two inequalities
	\begin{align}\label{estimateHolder01}
		u(x) - u(x_0) = t_1 - \varepsilon - (t_0 - \varepsilon) &\leq ( [f]_{\beta}(B_{R_c}) + C(n,s,R_c)\,\tilde{\delta}_1^{1-\beta})\,\tilde{\delta}_1^{\beta} \nonumber\\
		&\leq ( [f]_{\beta}(B_{R_c}) + C(n,s,R_c)\,r_0^{1-\beta})\,\tilde{\delta}_1^{\beta}. 
	\end{align}
	and 
	\begin{align}\label{estimateHolder02}
		u(x_0) - u(y) = t_0 - \varepsilon - (t_2 - \varepsilon) &\leq ( [f]_{\beta}(B_{R_c}) + C(n,s,R_c)\,\tilde{\delta}_2^{1-\beta})\,\tilde{\delta}_2^{\beta} \nonumber\\
		&\leq ( [f]_{\beta}(B_{R_c}) + C(n,s,R_c)\,r_0^{1-\beta})\,\tilde{\delta}_2^{\beta} 
	\end{align}
	where we set $\tilde{\delta}_1 \coloneqq \dist(\partial E_{t_0},\partial E_{t_1})$ and $\tilde{\delta}_2 \coloneqq \dist(\partial E_{t_0},\partial E_{t_2})$. Note that the constant $C(n,s,R_c)>0$ also depends on the $L^{\infty}$-norm of $\nabla^2 d_{t_0}$, which can be uniformly bounded in $B_{r_0}(x_0)$. Notice that the inequality
	\begin{equation}\nonumber
		\tilde{\delta}_1 + \tilde{\delta}_2 = \dist(\partial E_{t_0},\,\partial E_{t_1}) + \dist(\partial E_{t_0},\,\partial E_{t_2}) \leq \dist(\partial E_{t_1},\,\partial E_{t_2}) \leq |x-y|
	\end{equation}
	holds because of the fact that $E_{t_1} \subset E_{t_0} \subset E_{t_2}$. Therefore from \eqref{estimateHolder01} and \eqref{estimateHolder02}, we obtain that there exists a constant $C=C(n,s,f,R_c,r_0,x_0)>0$ (we have assumed that the space-dimension $n$ is two) such that
	\begin{align}
		|u(x) - u(y)| &= |u(x) - u(x_0) + u(x_0) - u(y)|  \nonumber\\
		&\leq C\,(\tilde{\delta}_1^{\beta} + \tilde{\delta}_2^{\beta}) \leq C\,2^{1-\beta}(\tilde{\delta}_1 + \tilde{\delta}_2)^{\beta} \leq 2^{1-\beta}C\, |x-y|^{\beta}. \nonumber
	\end{align}
	Here, in the second inequality, we have used the fact that $2^{1-\beta}(x+1)^{\beta} \geq x^{\beta} + 1$ for any $x \geq 1$ and $\beta \in (0,\,1)$ and applied this fact with $x=\widetilde{\delta}_1\,\widetilde{\delta}_2^{-1}$ if $\widetilde{\delta}_1 \geq \widetilde{\delta}_2$ or $x=\widetilde{\delta}_2\,\widetilde{\delta}_1^{-1}$ if $\widetilde{\delta}_1 < \widetilde{\delta}_1$.
\end{proof}


\end{document}